\documentclass{article}
\usepackage{amsmath, amsthm, amssymb, amsfonts}
\usepackage{bm}
\usepackage{empheq}
\usepackage[T1]{fontenc}
\usepackage{xcolor}
\definecolor{revisionorange}{RGB}{230,110,20}

\numberwithin{equation}{section}

\usepackage{geometry}
\usepackage[shortlabels]{enumitem}
\usepackage[normalem]{ulem}

\usepackage{algorithm}
\usepackage{algorithmic}
\usepackage{adjustbox}
\usepackage{booktabs}
\usepackage{multirow}
\usepackage{siunitx}
\usepackage{array}
\usepackage{graphicx}
\usepackage{caption}
\usepackage{subcaption}
\usepackage{tikz}
\usetikzlibrary{
  positioning,
  fit,
  arrows.meta,
  calc,
  decorations.markings,
  backgrounds,
  shadows
}

\graphicspath{ {./img/} }

\usepackage{cite}

\usepackage{hyperref}
\hypersetup{
    colorlinks=true,
    linkcolor=blue
}

\usepackage{thmtools}
\usepackage{thm-restate}

\usepackage[capitalize, noabbrev, nameinlink]{cleveref}

\declaretheorem[name=Theorem, numberwithin=section]{theorem}
\declaretheorem[name=Proposition, sibling=theorem]{proposition}
\declaretheorem[name=Corollary,   sibling=theorem]{corollary}
\declaretheorem[name=Lemma,       sibling=theorem]{lemma}

\declaretheorem[name=Definition,  style=definition, sibling=theorem]{definition}

\declaretheorem[name=Remark,      style=remark,     sibling=theorem]{remark}

\crefdefaultlabelformat{#2\textup{#1}#3}

\crefformat{equation}{\textup{#2(#1)#3}}
\crefrangeformat{equation}{\textup{#3(#1)#4--#5(#2)#6}}
\crefmultiformat{equation}{\textup{#2(#1)#3}}{ and \textup{#2(#1)#3}}{, \textup{#2(#1)#3}}{, and \textup{#2(#1)#3}}

\Crefformat{equation}{#2Equation~\textup{(#1)}#3}
\Crefrangeformat{equation}{Equations~\textup{#3(#1)#4--#5(#2)#6}}
\Crefmultiformat{equation}{Equations~\textup{#2(#1)#3}}{ and \textup{#2(#1)#3}}{, \textup{#2(#1)#3}}{, and \textup{#2(#1)#3}}

\crefname{section}{section}{sections}
\Crefname{section}{Section}{Sections}

\begin{document}

\begin{center}

{\bf \large
    \textsc{From P\'olya's Conditions to a Complete Characterization of the $L^2$ Convergence of Hyperinterpolation}
}

\vspace{0.3cm}
\scshape
\renewcommand{\thefootnote}{\fnsymbol{footnote}}

Congpei An\footnote{School of Mathematics and Statistics, Guizhou University, Guiyang 550025, China. This author was supported by NSFC (No. 12371099). Email: andbachcp@gmail.com}
\qquad
Xiannan Hu\footnote{Department of Mathematics, The University of Hong Kong, Hong Kong, China. Email: hans0711@connect.hku.hk}
\qquad
Xiaoming Yuan\footnote{Department of Mathematics, The University of Hong Kong, Hong Kong, China. This author was supported by the Croucher Senior Fellowship and the GRF 17305825. Email: xmyuan@hku.hk}

\renewcommand{\thefootnote}{\arabic{footnote}}
\setcounter{footnote}{0}

\vspace{0.3cm}
August 1, 2026 % Uploaded to arXiv
\end{center}

\begin{abstract}
It has remained open to identify the necessary and sufficient conditions for the $L^2$ convergence of hyperinterpolation since it was introduced by Sloan in 1995. We show that the $L^1$--$L^2$ Marcinkiewicz--Zygmund (MZ) condition, together with the asymptotic functional approximation property for polynomials, is the answer. We further prove that the optimal $L^1$--$L^2$ MZ constant coincides with the operator norm of the hyperinterpolation operator, and it admits a natural Banach space duality interpretation. With an explicit construction, we also show that P\'{o}lya's classical conditions for quadrature convergence are not sufficient for the $L^2$ convergence of hyperinterpolation. This reveals a fundamental distinction between the convergence of linear functionals (quadrature formulas) and that of linear operators (hyperinterpolation operators). We establish a strict logical hierarchy for the stability and accuracy conditions governing the convergence of quadrature and hyperinterpolation.
\end{abstract}

\vspace{0.3cm}
\noindent\textbf{\textsc{Keywords}}: hyperinterpolation, P\'olya's conditions, Marcinkiewicz--Zygmund inequalities, \\
\phantom{\textbf{\textsc{Keywords}}\;\,} functional approximation, $L^2$ convergence, sphere.

\vspace{0.3cm}

\noindent\textbf{\textsc{AMS Subject Classifications}}: 41A35, 41A17, 41A55, 65D15, 65D32.

\section{Introduction}

Hyperinterpolation was introduced by Sloan in his seminal paper \cite{sloan1995polynomial} and subsequently developed in, e.g., \cite{dai2006generalized,hesse2006hyperinterpolation,reimer2000hyperinterpolation,sloan2000constructive,reimer2012multivariate,sloan2012filtered,reimer2002generalized}. It is a constructive polynomial approximation method for continuous functions on a bounded region $\Omega$ of finite measure, which is either the closure of a connected open domain or a smooth closed manifold. It is obtained from the orthogonal projection onto $\mathbb{P}_n(\Omega)$, the space of polynomials on $\Omega$ of degree at most $n$, by replacing the integrals in the $L^2$ inner products with quadrature evaluations. Within this general framework, the unit sphere is a canonical and particularly important setting, owing to its well-understood spherical harmonic structure and its central role in the approximation and analysis of global and scattered data. We therefore focus throughout this paper on the unit sphere $\mathbb{S}^2 := \{\mathbf{x} \in \mathbb{R}^3 \,|\, \|\mathbf{x}\|_2 = 1\}.$

In the literature, e.g., \cite{sloan1995polynomial,sloan2000constructive}, the quadrature formula $Q$ is classically assumed to integrate all polynomials of degree up to $2n$ exactly with positive weights. Under these assumptions, the hyperinterpolation operator $L_n: C(\mathbb{S}^2) \to \mathbb{P}_n(\mathbb{S}^2)$ converges in the following $L^2$ sense \cite[Theorem~1]{sloan1995polynomial}:
\begin{equation} \label{eq:hyperinterpolation-convergence}
    \lim_{n\to\infty} \|L_n f - f\|_{L^2(\mathbb{S}^2)} = 0 \quad \forall\, f\in C(\mathbb{S}^2).
\end{equation}
In \cite{an2024bypassing}, it was shown that algebraic exactness is not essential for the $L^2$ convergence of hyperinterpolation, while positivity of the quadrature weights remains. For real-world applications with scattered data such as MAGSAT magnetic field measurements and microwave background observations \cite{ whaler1994downward,mandea2006magnetic,olsen2006chaos,bennett1996four,jarosik2011seven}, quadrature nodes are often prescribed by physical and instrumental constraints. For such data, the mentioned theoretical assumptions may not be satisfied and it becomes infeasible to construct quadrature formulas with positive weights and high algebraic exactness simultaneously. Instead, despite that negative weights are commonly regarded as a source of instability for numerical integration, quadrature formulas with lower exactness and signed weights should be considered for practical computation.

We are thus motivated to study hyperinterpolation with neither the positivity of weights nor the exactness assumptions of quadrature formulas. For this more general setting, recently in \cite[(5.8)]{an2026optimizationapproachweightcollocation} we presented a stability-accuracy decomposition for the hyperinterpolation error $\|L_nf - f\|_{L^2(\mathbb{S}^2)}$, where the stability term is governed by the $L^2$ Marcinkiewicz–Zygmund (MZ) condition and the accuracy term is determined by the $L^2$ MZ inequality. Still, a fundamental problem remains open: 
\begin{quotation}
    \textit{
    What exactly are the necessary and sufficient conditions for the $L^2$ convergence of hyperinterpolation \eqref{eq:hyperinterpolation-convergence}?}
\end{quotation}

Recall that P\'olya's classical theorem on quadrature convergence in \cite{polya1933konvergenz} states that a sequence of quadrature formulas converges for every continuous function if and only if the sequence of quadrature formulas satisfy the uniform P\'olya stability condition (cf. \eqref{eq:polya-cond}) and the asymptotic approximation property for polynomials (cf. \eqref{eq:approximation-property}). The former is a stability condition equivalent to the uniform boundedness of the operator norms of the quadrature functionals (cf. \cref{rem:quadrature-stability}), whereas the latter is an accuracy condition requiring asymptotic exactness on all polynomials. Since hyperinterpolation is obtained by replacing the $L^2$ inner products in the orthogonal projection with quadrature evaluations, it seems intuitive to conjecture that P\'olya's conditions can also guarantee the $L^2$ convergence of hyperinterpolation. 

We show that this conjecture is false, by explicitly constructing a sequence of quadrature formulas $\{Q_n\}_{n\in\mathbb{N}}$ satisfying P\'olya's conditions for which their associated hyperinterpolation operators $L_n$ have unbounded operator norms (cf. \cref{thm:hyper-2-neg}). Then, by the uniform boundedness principle, there exists a function $f \in C(\mathbb{S}^2)$ such that $\limsup_{n\to\infty}\|L_n f\|_{L^2(\mathbb{S}^2)} = \infty$, where hyperinterpolation fails to converge. This counterexample shows that the convergence of linear functionals (represented here by quadrature formulas) is strictly weaker than the convergence of the corresponding linear operators (represented here by hyperinterpolation operators). Indeed, both the stability and accuracy conditions of P\'olya considered in \cite{polya1933konvergenz} are insufficient. We first construct a sequence of quadrature formulas that satisfies the uniform P\'olya stability condition but does not control the stability of higher degree polynomials (cf. \cref{prop:Polya-does-not-imply-L1-L2-MZ}). Then, we construct another sequence that satisfies the asymptotic approximation property for polynomials but the hyperinterpolation does not converge even for the constant function (cf. \cref{prop:AP-does-not-imply-FAP}).

Inspired by the classical $L^1$ and $L^2$ MZ conditions in \cite{gia2009localized} for the stability of hyperinterpolation, we propose the $L^1$--$L^2$ MZ condition (cf. \eqref{eq:L1-L2-MZ}) to integrate the discrete $L^1$ seminorm with the continuous $L^2$ norm. We further show that the $L^1$--$L^2$ MZ condition is also necessary for the stability of hyperinterpolation (cf. \cref{thm:L1-L2-MZ-equivalent-to-hyperinterpolation-stability}). The optimal $L^1$--$L^2$ MZ constant of a quadrature formula coincides with the operator norm of its associated hyperinterpolation operator (cf. \cref{cor:sharp-MZ-constant}). This norm identity also admits a natural Banach space duality interpretation in the following sense: the discrete $L^1$ expression is the total variation norm of an atomic signed measure, whereas the $L^2$ norm reflects the Hilbert-space geometry of the polynomial space (cf. \Cref{sec:duality}). The $L^1$--$L^2$ MZ condition, combined with the asymptotic functional approximation property for polynomials \eqref{eq:functional-approximation-property}, yields a complete characterization of the $L^2$ convergence of hyperinterpolation (cf. \cref{thm:convergence}). 

These results lead to a strict logical hierarchy among the stability and accuracy conditions considered in this paper (cf. \cref{thm:hierarchy}). In particular, the classical $L^1$ and $L^2$ MZ conditions are strictly stronger than the $L^1$--$L^2$ MZ condition, which is in turn strictly stronger than the uniform P\'olya stability condition. Also, the asymptotic functional approximation property \eqref{eq:functional-approximation-property} is strictly stronger than the asymptotic approximation property \eqref{eq:approximation-property}. The schematic diagram in \cref{fig:hierarchy} summarizes the strict logical hierarchy of stability and accuracy conditions pertinent to the convergence of quadrature and hyperinterpolation.

\begin{figure}
\centering
\tikzset{
  cdimpl/.style={
    -{Stealth[length=2.4mm]},
    line width=0.4pt,
    double,
    double distance=0.7pt
  },
  cdnotimpl/.style={
    -{Stealth[length=2.4mm]},
    line width=0.4pt,
    double,
    double distance=0.7pt,
    decoration={
      markings,
      mark=at position 0.5 with {
        \draw[-,line width=0.5pt]
          (-0.9mm,-1.4mm)--(0.9mm,1.4mm);
      }
    },
    postaction=decorate
  },
  mzimpl/.style={
    -{Stealth[length=2.1mm]},
    line width=0.3pt,
    double,
    double distance=0.55pt
  },
  mznotimpl/.style={
    -{Stealth[length=2.1mm]},
    line width=0.3pt,
    double,
    double distance=0.55pt,
    decoration={
      markings,
      mark=at position 0.5 with {
        \draw[-,line width=0.4pt]
          (-0.8mm,-1.2mm)--(0.8mm,1.2mm);
      }
    },
    postaction=decorate
  },
  cdequivbase/.style={
    {Stealth[length=2.4mm]}-{Stealth[length=2.4mm]},
    line width=0.6pt,
    double,
    double distance=0.8pt,
    shorten <=-1pt,
    shorten >=-1pt
  },
  cdequivhyp/.style={
    cdequivbase
  },
  cdequivquad/.style={
    cdequivbase
  },
  condbase/.style={
    draw=black!50,
    rounded corners=3pt,
    line width=0.5pt,
    inner sep=5pt,
    drop shadow={
      shadow xshift=1pt,
      shadow yshift=-1pt,
      opacity=0.45,
      fill=black!60
    }
  },
  condhyp/.style={
    condbase,
    top color=white,
    bottom color=teal!28
  },
  condquad/.style={
    condbase,
    top color=white,
    bottom color=orange!30
  },
  condmz/.style={
    condbase,
    top color=white,
    bottom color=blue!22
  },
  grouphyp/.style={
    draw=teal!60!black,
    dashed,
    rounded corners=7pt,
    inner sep=9pt,
    line width=0.8pt,
    fill=teal!8
  },
  groupquad/.style={
    draw=orange!70!black,
    dashed,
    rounded corners=7pt,
    inner sep=9pt,
    line width=0.8pt,
    fill=orange!8
  },
  sepline/.style={
    dashed,
    draw=black!45,
    line width=0.5pt,
    dash pattern=on 3pt off 2.5pt
  },
  bandlab/.style={
    rotate=90,
    font=\footnotesize\sffamily,
    text=black
  },
  reflab/.style={
    font=\scriptsize
  },
  pluslab/.style={
    font=\large\bfseries,
    fill=white,
    fill opacity=0.9,
    text opacity=1,
    inner sep=1.5pt,
    rounded corners=1pt
  }
}

\newlength{\mzboxwd}
\setlength{\mzboxwd}{6cm}

\newlength{\opboxwd}
\setlength{\opboxwd}{4cm}

\adjustbox{max width=\linewidth}{%
\begin{tikzpicture}[
  x=1cm,
  y=1cm,
  every node/.style={align=center}
]

  % ============================================================
  % CONVERGENCE
  % ============================================================

  \node[
    condhyp,
    minimum width=\mzboxwd
  ] (hiconv) at (3.5,7.0) {
    \itshape Hyperinterpolation Convergence\\[2pt]
    \normalfont $\|L_n f-f\|_{L^2(\mathbb{S}^2)}\to0$
  };

  \node[
    condquad,
    minimum width=\mzboxwd
  ] (qconv) at (13.0,7.0) {
    \itshape Quadrature Convergence\\[2pt]
    \normalfont $|Q_n(f)-I(f)|\to0$
  };

  % ============================================================
  % ACCURACY
  % ============================================================

  \node[
    condhyp,
    text width=\mzboxwd
  ] (fap) at (3.5,3.55) {
    $\|L_n p-p\|_{L^2(\mathbb{S}^2)}\to0$
    \eqref{eq:functional-approximation-property}
  };

  \node[
    condquad,
    text width=\mzboxwd
  ] (approx) at (13.0,3.55) {
    $|Q_n(p)-I(p)|\to0$
    \eqref{eq:approximation-property}
  };

  % ============================================================
  % STABILITY: CONDITIONS
  % ============================================================

  \node[
    condhyp,
    text width=\mzboxwd
  ] (l1l2) at (3.5,1.90) {
    $L^1$--$L^2$ MZ \eqref{eq:L1-L2-MZ}
  };

  \node[
    condquad,
    text width=\mzboxwd
  ] (polya) at (13.0,1.90) {
    Uniform P\'olya Stability \eqref{eq:polya-cond}
  };

  % ============================================================
  % STABILITY: OPERATOR-NORM CHARACTERIZATIONS
  % ============================================================

  \node[
    condhyp,
    text width=\opboxwd
  ] (opLn) at (3.5,-1.5) {
    $\displaystyle
      \sup^{\phantom{n\in\mathbb{N}}}_{n\in\mathbb{N}}
      \|L_n\|_{C(\mathbb{S}^2) \to L^2(\mathbb{S}^2)}\leq C$
  };

  \node[
    condquad,
    text width=\opboxwd
  ] (opQn) at (13.0,-1.5) {
    $\displaystyle
      \sup^{\phantom{n\in\mathbb{N}}}_{n\in\mathbb{N}}
      \|Q_n\|_{C(\mathbb{S}^2) \to\mathbb{R}}\leq M$
  };

  % ============================================================
  % STABILITY: CLASSICAL MZ CONDITIONS
  % ============================================================

  \node[condmz] (l1mz) at (1.00,-3) {
    $L^1$ MZ \eqref{eq:L1-MZ}
  };

  \node[condmz] (l2mz) at (6.00,-3) {
    $L^2$ MZ \eqref{eq:L2-MZ}
  };

  \coordinate (l1entry) at
    (l1mz.north |- l1l2.south);

  \coordinate (l2entry) at
    (l2mz.north |- l1l2.south);

  % Combination symbols.
  \node[pluslab,text=teal!55!black]
    at (3.0,2.70) {\textbf{+}};

  \node[pluslab,text=orange!70!black]
    at (13.0,2.70) {\textbf{+}};

  % ============================================================
  % ROW BOUNDARIES
  % ============================================================

  \coordinate (sepU) at (0,5.15);
  \coordinate (sepM) at (0,2.80);

  \coordinate (topGuide)    at (-1.0,7.45);
  \coordinate (sepUGuide)   at (-1.0,5.15);
  \coordinate (sepMGuide)   at (-1.0,2.80);
  \coordinate (bottomGuide) at (-1.0,-2.90);

  \coordinate (convLabelPos) at
    ($(topGuide)!0.5!(sepUGuide)$);

  \coordinate (accLabelPos) at
    ($(sepUGuide)!0.5!(sepMGuide)$);

  \coordinate (stabLabelPos) at
    ($(sepMGuide)!0.5!(bottomGuide)$);

  \node[bandlab] (lconv) at (convLabelPos) {
    Convergence
  };

  \node[bandlab] (laccu) at (accLabelPos) {
    Accuracy
  };

  \node[bandlab] (lstab) at (stabLabelPos) {
    Stability
  };

  % ============================================================
  % DASHED GROUP BOXES AND SEPARATORS
  % ============================================================

  \begin{scope}[on background layer]

    \node[grouphyp,fit=(fap)(l1l2)]
      (boxhyp) {};

    \node[groupquad,fit=(approx)(polya)]
      (boxquad) {};

    \node[fit=(lconv)(laccu)(lstab),inner sep=0pt]
      (bL) {};

    \draw[sepline]
      ([xshift=-6pt]bL.west |- sepU)
      --
      (boxquad.east |- sepU);

    \draw[sepline]
      ([xshift=-6pt]bL.west |- sepM)
      --
      (boxquad.east |- sepM);

  \end{scope}

  % ============================================================
  % MIDPOINTS
  % ============================================================

  \coordinate (l1mid) at
    ($(l1mz.north)!0.5!(l1entry)$);

  \coordinate (l2mid) at
    ($(l2mz.north)!0.5!(l2entry)$);

  \coordinate (hconvmid) at
    ($(boxhyp.north)!0.5!(hiconv.south)$);

  \coordinate (qconvmid) at
    ($(boxquad.north)!0.5!(qconv.south)$);

  \coordinate (lnormmid) at
    ($(l1l2.south)!0.5!(opLn.north)$);

  \coordinate (qnormmid) at
    ($(polya.south)!0.5!(opQn.north)$);

  % ============================================================
  % ACCURACY RELATIONS
  % ============================================================

  \draw[cdimpl,transform canvas={yshift=4pt}]
    (fap.east)--(approx.west)
    node[midway,above,reflab] {
      \cref{prop:fap-implies-ap}
    };

  \draw[cdnotimpl,transform canvas={yshift=-4pt}]
    (approx.west)--(fap.east)
    node[midway,below,reflab] {
      \cref{prop:AP-does-not-imply-FAP}
    };

  % ============================================================
  % CONVERGENCE RELATIONS
  % ============================================================

  \draw[cdimpl,transform canvas={yshift=4pt}]
    (hiconv.east)--(qconv.west)
    node[midway,above,reflab] {};

  \draw[cdnotimpl,transform canvas={yshift=-4pt}]
    (qconv.west)--(hiconv.east)
    node[midway,below,reflab] {
      \cref{thm:hyper-2-neg}
    };

  % ============================================================
  % L1--L2 MZ AND PÓLYA
  % ============================================================

  \draw[cdimpl,transform canvas={yshift=4pt}]
    (l1l2.east)--(polya.west)
    node[midway,above,reflab] {
      \cref{lem:L1-L2-MZ-implies-Polya}
    };

  \draw[cdnotimpl,transform canvas={yshift=-4pt}]
    (polya.west)--(l1l2.east)
    node[midway,below,reflab] {
      \cref{prop:Polya-does-not-imply-L1-L2-MZ}
    };

  % ============================================================
  % CLASSICAL MZ RELATIONS
  % ============================================================

  \draw[mzimpl,transform canvas={xshift=-5pt}]
    (l1mz.north)--(l1entry);

  \draw[mznotimpl,transform canvas={xshift=5pt}]
    (l1entry)--(l1mz.north);

  \node[reflab,rotate=90]
    at ([xshift=-12pt, yshift=20pt]l1mid) {
      \cref{lem:L1-MZ-implies-L1-L2-MZ}
    };

  \node[reflab,rotate=90]
    at ([xshift=12pt, yshift=20pt]l1mid) {
      \cref{thm:hierarchy}
    };

  \draw[mzimpl,transform canvas={xshift=-5pt}]
    (l2mz.north)--(l2entry);

  \draw[mznotimpl,transform canvas={xshift=5pt}]
    (l2entry)--(l2mz.north);

  \node[reflab,rotate=90]
    at ([xshift=-12pt, yshift=20pt]l2mid) {
      \cref{lem:L2-MZ-implies-L1-L2-MZ}
    };

  \node[reflab,rotate=90]
    at ([xshift=12pt, yshift=20pt]l2mid) {
      \cref{thm:hierarchy}
    };

  % ============================================================
  % CONVERGENCE EQUIVALENCES
  % ============================================================

  \draw[cdequivhyp]
    (boxhyp.north)--(hiconv.south);

  \node[reflab,rotate=90]
    at ([xshift=-10pt]hconvmid) {
      \cref{thm:convergence}
    };

  \draw[cdequivquad]
    (boxquad.north)--(qconv.south);

  \node[reflab,rotate=90]
    at ([xshift=-10pt]qconvmid) {
      \cref{thm:polya}
    };

  % ============================================================
  % OPERATOR-NORM EQUIVALENCES
  % ============================================================

  \draw[cdequivhyp]
    (l1l2.south)--(opLn.north);

  \node[reflab,rotate=90]
    at ([xshift=-12pt]lnormmid) {
      \cref{thm:L1-L2-MZ-equivalent-to-hyperinterpolation-stability}
    };

  \draw[cdequivquad]
    (polya.south)--(opQn.north);

  \node[reflab,rotate=90]
    at ([xshift=-12pt]qnormmid) {
      \cref{rem:quadrature-stability}
    };

\end{tikzpicture}
}

\caption{The complete landscape of various stability and accuracy conditions related to quadrature and $L^2$ hyperinterpolation convergence.}
    \label{fig:hierarchy}
\end{figure}

The remainder of this paper is organized as follows. In \Cref{sec:preliminaries}, we recall essential preliminaries. In \Cref{sec:existence}, we propose the $L^1$--$L^2$ MZ condition and prove its existence. Its connection to classical MZ theory is also discussed. \Cref{sec:does-not-imply} presents a counterexample demonstrating that P\'olya's conditions on quadrature convergence are not sufficient to guarantee the $L^2$ convergence of hyperinterpolation, and further discusses the independent limitations of the stability and accuracy conditions of P\'olya. In \Cref{sec:equivalence}, we establish the necessary and sufficient conditions for the $L^2$ convergence of hyperinterpolation. This is the main result of the paper. \Cref{sec:hierarchy} delineates the strict logic hierarchy formed by various stability and accuracy conditions. \Cref{sec:validation} discusses some computational aspects related to our theoretical assertions, and presents numerical validations of the hierarchy formed by the stability conditions. Finally, we make some concluding remarks in \Cref{sec:conclusion}. 

Note that the present paper is concerned exclusively with the $L^2$ convergence. Indeed, the operator norm of any linear projection, including hyperinterpolation, in the setting $C \to C$ grows at least on the order of $\sqrt{n}$ \cite[Theorem~6.6]{reimer2012multivariate}. Questions of uniform convergence require different stability mechanisms, and they are not pursued here.

\section{Preliminaries}
\label{sec:preliminaries}

We equip $\mathbb{S}^2$ with the Lebesgue surface measure $\omega$, normalized so that $|\mathbb{S}^2| = \int_{\mathbb{S}^2}\,d\omega(\mathbf{x}) = 4\pi$. The intrinsic geometry of $\mathbb{S}^2$ is given by the geodesic distance $\mathrm{dist}(\mathbf{x}, \mathbf{y}) = \arccos(\langle \mathbf{x}, \mathbf{y}\rangle)$, where $\langle \cdot, \cdot\rangle$ denotes the standard Euclidean inner product in $\mathbb{R}^3$.

\subsection{Spherical Harmonics}
Let $C(\mathbb{S}^2)$ denote the space of real-valued continuous functions on $\mathbb{S}^2$ equipped with the uniform norm $\|f\|_{\infty} := \sup_{\mathbf{x} \in \mathbb{S}^2} |f(\mathbf{x})|$. For $p \in [1, \infty)$, we denote the standard Lebesgue space by $L^p(\mathbb{S}^2)$ equipped with the norm $\|f\|_{L^p(\mathbb{S}^2)} := (\int_{\mathbb{S}^2} |f(\mathbf{x})|^p\,d\omega(\mathbf{x}))^{1/p}$. In particular, $L^2(\mathbb{S}^2)$ is a Hilbert space with the inner product
$\langle f, g \rangle_{L^2(\mathbb{S}^2)} := \int_{\mathbb{S}^2} f(\mathbf{x})g(\mathbf{x})\,d\omega(\mathbf{x})$.

\textit{Spherical harmonics} \cite{muller2006spherical} are the restrictions of harmonic homogeneous polynomials in $\mathbb{R}^3$ to $\mathbb{S}^2$. Let $\mathbb{H}_\ell$ denote the space of spherical harmonics of degree $\ell \in \mathbb{N}_0$. It is known that $\mathrm{dim}(\mathbb{H}_\ell) = 2\ell + 1$. We choose an orthonormal basis for each $\mathbb{H}_\ell$
\begin{equation*}
    \{Y_{\ell, k} \,|\, k = -\ell, \ldots, -1, 0, 1, \ldots, \ell\}.
\end{equation*}
The spherical harmonics satisfy the \textit{addition theorem}:
\begin{equation} \label{eq:addition-theorem}
    \sum_{k = -\ell}^{\ell} Y_{\ell, k}(\mathbf{x})Y_{\ell, k}(\mathbf{y}) = \frac{2\ell + 1}{4\pi}P_\ell(\langle \mathbf{x}, \mathbf{y}\rangle) \quad \forall\, \mathbf{x}, \mathbf{y} \in \mathbb{S}^2,
\end{equation}
where $P_{\ell}$ is the Legendre polynomial of degree $\ell$ normalized such that $P_\ell(1) = 1$.

Let $\mathbb{P}_n(\mathbb{S}^2) = \bigoplus_{\ell=0}^n \mathbb{H}_\ell$ be the space of spherical polynomials of degree at most $n$, and let $\mathbb{P}(\mathbb{S}^2) = \bigoplus_{\ell=0}^\infty \mathbb{H}_\ell$ be the space of all spherical polynomials. Because spherical harmonics of different degrees are mutually orthogonal, the collection $\{Y_{\ell, k}\,|\, 0 \leq \ell \leq n, |k| \leq \ell\}$ forms an orthonormal basis for $\mathbb{P}_n(\mathbb{S}^2)$, implying that $\mathrm{dim}(\mathbb{P}_n(\mathbb{S}^2)) = (n+1)^2$. Furthermore, the set of all spherical harmonics forms a complete orthonormal system for $L^2(\mathbb{S}^2)$.

Throughout this paper, we denote by $e \in \mathbb{P}_0(\mathbb{S}^2)$ the constant polynomial $e(\mathbf{x}) \equiv 1$. Note that $e(\mathbf{x}) = \sqrt{4\pi}Y_{0,0}(\mathbf{x})$.

\subsection{Quadrature Formulas}

Let
\begin{equation*}
    I: C(\mathbb{S}^2)\to\mathbb{R},
    \qquad
    I(f) := \int_{\mathbb{S}^2}f(\mathbf{x})\,d\omega(\mathbf{x})
\end{equation*}
denote the integration functional. Let $X=\{\mathbf{x}_1,\ldots,\mathbf{x}_N\}\subseteq\mathbb{S}^2$ be a set of $N$ distinct nodes and $\mathbf{w} = (w_1,\ldots,w_N)^\top\in\mathbb{R}^N$ be a vector of real weights. The \textit{quadrature formula} associated with the nodes $X$ and weights $\mathbf{w}$ is the linear functional
\begin{equation*}
    Q_{X,\mathbf{w}}: C(\mathbb{S}^2)\to\mathbb{R},
    \qquad
    Q_{X,\mathbf{w}}(f):=\sum_{j=1}^{N}w_jf(\mathbf{x}_j).
\end{equation*}
When the nodes and weights are clear from the context, we write simply $Q$. Its operator norm is
\begin{equation}\label{eq:quadrature-functional-norm}
    \|Q\|_{C(\mathbb{S}^2)\to\mathbb{R}} := \sup_{\|f\|_\infty\leq1}|Q(f)| = \sum_{j=1}^{N}|w_j|
      = \|\mathbf{w}\|_1.
\end{equation}
Equivalently, $\|Q\|_{C(\mathbb{S}^2)\to\mathbb{R}}$ is the dual norm of $Q$ in $C(\mathbb{S}^2)^*$. A quadrature formula $Q$ is said to be \textit{exact} of degree $m$ if $Q(p)=I(p)$ for all $p\in\mathbb{P}_m(\mathbb{S}^2)$. For each $n \in \mathbb{N}$, let $X^{(n)}=\{\mathbf{x}_1^{(n)},\ldots,\mathbf{x}_{N_n}^{(n)}\}$ be a set of $N_n$ distinct nodes, and let $\mathbf{w}^{(n)}=(w_1^{(n)},\ldots,w_{N_n}^{(n)})^\top \in\mathbb{R}^{N_n}$ be the associated weight vector. Define $Q_n:=Q_{X^{(n)},\mathbf{w}^{(n)}}$. Here, the subscript $n$ serves only as a sequence index and does not, in general, indicate the degree of exactness of $Q_n$.

\subsection{Spherical Designs}
Many constructions of quadrature formulas in this paper are based on spherical designs. A \textit{spherical $n$-design} is a set of points $X^{(n)}$ with the characterizing property that an equal-weight quadrature formula associated with these points integrates all polynomials of degree at most $n$ exactly, that is,
\begin{equation*}
I(p) = \frac{4\pi}{N_n} \sum_{j=1}^{N_n} p(\mathbf{x}_j^{(n)}) \quad \forall\, p \in \mathbb{P}_n(\mathbb{S}^2).
\end{equation*}
We refer to \cite{bondarenko2013optimal} for the existence of spherical $n$-designs. Spherical designs were used in \cite[Section~4.1]{sloan1995polynomial} as the canonical quadrature formulas for hyperinterpolation. In this paper, we use them to construct counterexamples.

\subsection{Hyperinterpolation}

For a quadrature formula $Q$, define a discrete bilinear form $\langle \cdot, \cdot \rangle_{Q}$ on $C(\mathbb{S}^2)$ as 
\begin{equation*}
    \langle f, g \rangle_Q := Q(fg) = \sum_{j=1}^N w_j f(\mathbf{x}_j)g(\mathbf{x}_j), \quad f, g \in C(\mathbb{S}^2).
\end{equation*}
The weights $w_j$ are allowed to be signed in the general setting of this paper. Let $G_n$ be the reproducing kernel of $\mathbb{P}_n(\mathbb{S}^2)$ defined by
\begin{equation*}
    G_n(\mathbf{x}, \mathbf{y}) := 
    \sum_{\ell=0}^n \sum_{k = -\ell}^{\ell} Y_{\ell, k}(\mathbf{x})Y_{\ell, k}(\mathbf{y}) = \sum_{\ell=0}^n \frac{2\ell+1}{4\pi} P_\ell(\langle \mathbf{x}, \mathbf{y}\rangle),
\end{equation*}
where the last equality uses the addition theorem \eqref{eq:addition-theorem}. The following well-known properties of $G_n$ will be utilized throughout our analysis.

\begin{lemma} \label{lem:Gn-properties}
    For all $\mathbf{x},\mathbf{y}\in\mathbb{S}^2$, we have
    \begin{enumerate}[(1), noitemsep]
        \item $\|G_n\|_\infty = G_n(\mathbf{x},\mathbf{x}) = (n+1)^2/(4\pi)$; and
        \item the following reproducing property holds for $G_n$:
        \begin{equation} \label{eq:reproducing-property}
            p(\mathbf{x}) = \int_{\mathbb{S}^2} G_n(\mathbf{x},\mathbf{y})p(\mathbf{y})\,d\omega(\mathbf{y}) \quad \forall\, p \in \mathbb{P}_n(\mathbb{S}^2).
        \end{equation}
    \end{enumerate}
\end{lemma}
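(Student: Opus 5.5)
The plan is to prove the reproducing property (2) first, since it follows directly from orthonormality, and then deduce (1) from the addition theorem \eqref{eq:addition-theorem} together with Cauchy--Schwarz.

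\emph{Step 1: the reproducing property.} Fix $p\in\mathbb{P}_n(\mathbb{S}^2)$ and expand it in the orthonormal basis $\{Y_{\ell,k}\}_{0\le\ell\le n,\,|k|\le\ell}$:
\begin{equation*}
p=\sum_{\ell=0}^n\sum_{k=-\ell}^{\ell}\langle p,Y_{\ell,k}\rangle_{L^2(\mathbb{S}^2)}\,Y_{\ell,k}.
\end{equation*}
Inserting the finite-sum definition of $G_n$ into $\int_{\mathbb{S}^2}G_n(\mathbf{x},\mathbf{y})p(\mathbf{y})\,d\omega(\mathbf{y})$ and interchanging the finite sum with the integral produces exactly this expansion evaluated at $\mathbf{x}$. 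This gives \eqref{eq:reproducing-property}.

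\emph{Step 2: the value on the diagonal.} Put $\mathbf{y}=\mathbf{x}$ in the addition theorem and use $P_\ell(1)=1$. This gives $\sum_{k}Y_{\ell,k}(\mathbf{x})^2=(2\ell+1)/(4\pi)$. Summing over $\ell=0,\dots,n$ and using $\sum_{\ell=0}^n(2\ell+1)=(n+1)^2$ yields
\begin{equation*}
G_n(\mathbf{x},\mathbf{x})=\frac{(n+1)^2}{4\pi},
\end{equation*}
which is independent of $\mathbf{x}$.

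\emph{Step 3: the sup-norm.} Regard the values $Y_{\ell,k}(\mathbf{x})$ as a vector in $\mathbb{R}^{(n+1)^2}$. By Cauchy--Schwarz,
\begin{equation*}
|G_n(\mathbf{x},\mathbf{y})|\le G_n(\mathbf{x},\mathbf{x})^{1/2}\,G_n(\mathbf{y},\mathbf{y})^{1/2}=\frac{(n+1)^2}{4\pi}.
\end{equation*}
Equality is attained on the diagonal, so $\|G_n\|_\infty=(n+1)^2/(4\pi)$.

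An alternative route for Step 3 is the Legendre bound $|P_\ell(t)|\le 1$ on $[-1,1]$, which gives the same estimate. None of the steps is a real obstacle. The only point needing care is that $\|G_n\|_\infty$ is the supremum over both variables, and the Cauchy--Schwarz bound in Step 3 handles this.
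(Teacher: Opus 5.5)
Your proof is correct and follows the paper's approach: orthonormality gives the reproducing property, and the addition theorem with $P_\ell(1)=1$ gives the diagonal value $(n+1)^2/(4\pi)$. For the sup-norm you apply Cauchy--Schwarz to the vector $(Y_{\ell,k}(\mathbf{x}))_{\ell,k}$ rather than the paper's bound $|P_\ell(t)|\le 1$; this is equally valid and avoids invoking the Legendre bound as a separate fact, since the same argument applied to a single degree $\ell$ actually yields that bound.
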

\begin{proof}
    To establish the first property, use $|P_\ell(t)| \leq P_\ell(1) = 1$. To establish the second property, use orthonormality of spherical harmonics.
\end{proof}

Following~\cite{sloan1995polynomial}, for a quadrature formula $Q$ and an integer
$n\in\mathbb{N}_0$, we define the \textit{degree-$n$ hyperinterpolation operator} 
\begin{equation*}
    L_{n,Q}:C(\mathbb{S}^2)\to\mathbb{P}_n(\mathbb{S}^2)
\end{equation*}
by replacing the $L^2$ inner products in the orthogonal projection onto
$\mathbb{P}_n(\mathbb{S}^2)$ with the following discrete bilinear form
$\langle\cdot,\cdot\rangle_Q$:
\begin{equation}\label{eq:hyperinterpolation}
    L_{n,Q}f(\mathbf{x}) :=\sum_{\ell=0}^{n}\sum_{k=-\ell}^{\ell} \langle f,Y_{\ell,k}\rangle_QY_{\ell,k}(\mathbf{x}) = \langle f,G_n(\mathbf{x},\cdot)\rangle_Q,
    \quad \mathbf{x}\in\mathbb{S}^2,
\end{equation}
where the last equality follows from the addition theorem \eqref{eq:addition-theorem}. When the quadrature formula is clear from the context, we write simply $L_n$. We equip $\mathbb{P}_n(\mathbb{S}^2)$ with the norm inherited from $L^2(\mathbb{S}^2)$. The operator norm of $L_{n}$ is
\begin{equation}\label{eq:hyperinterpolation-operator-norm}
    \|L_{n}\|_{C(\mathbb{S}^2)\to L^2(\mathbb{S}^2)} :=\sup_{\substack{f\in C(\mathbb{S}^2)\\f\neq0}} \frac{\|L_{n}f\|_{L^2(\mathbb{S}^2)}}{\|f\|_\infty} =\sup_{\|f\|_\infty\leq1}\|L_{n}f\|_{L^2(\mathbb{S}^2)}.
\end{equation}

Hyperinterpolation classically requires the underlying quadrature formula $Q$ to have positive weights and be exact of degree $2n$. Neither assumption is imposed in the general setting considered here. This extension is important for scattered data approximation, where positive quadratures of high algebraic exactness may be unavailable.

The following lemma will be used repeatedly. The identity is algebraic and requires neither positivity of the weights nor exactness of $Q$.
\begin{lemma}\label{lem:hyperinterpolation-lemma}
Let $Q$ be a quadrature formula. For any $f\in C(\mathbb{S}^2)$ and
$p\in\mathbb{P}_n(\mathbb{S}^2)$, it holds that
\begin{equation*}
    \langle L_{n, Q}f,p\rangle_{L^2(\mathbb{S}^2)}=\langle f,p\rangle_Q.
\end{equation*}
\end{lemma}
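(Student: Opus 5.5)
The plan is to compute the left-hand side directly from the kernel representation \eqref{eq:hyperinterpolation} and then apply the reproducing property \eqref{eq:reproducing-property} from \cref{lem:Gn-properties}. Since $L_{n,Q}f(\mathbf{x}) = \langle f, G_n(\mathbf{x},\cdot)\rangle_Q = \sum_{j=1}^N w_j f(\mathbf{x}_j) G_n(\mathbf{x},\mathbf{x}_j)$ is a finite sum, we can write
\begin{equation*}
    \langle L_{n,Q}f, p\rangle_{L^2(\mathbb{S}^2)} = \int_{\mathbb{S}^2} \sum_{j=1}^N w_j f(\mathbf{x}_j) G_n(\mathbf{x},\mathbf{x}_j) p(\mathbf{x})\,d\omega(\mathbf{x}) = \sum_{j=1}^N w_j f(\mathbf{x}_j) \int_{\mathbb{S}^2} G_n(\mathbf{x}_j,\mathbf{x}) p(\mathbf{x})\,d\omega(\mathbf{x}).
\end{equation*}
Interchanging the finite sum with the integral is trivial. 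The second equality also uses the symmetry $G_n(\mathbf{x},\mathbf{y}) = G_n(\mathbf{y},\mathbf{x})$, which is immediate from the definition of $G_n$.

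Next we apply \eqref{eq:reproducing-property} at the point $\mathbf{x}_j$. This is valid because $p\in\mathbb{P}_n(\mathbb{S}^2)$, and it gives $\int_{\mathbb{S}^2} G_n(\mathbf{x}_j,\mathbf{x})p(\mathbf{x})\,d\omega(\mathbf{x}) = p(\mathbf{x}_j)$. Hence the sum becomes $\sum_{j} w_j f(\mathbf{x}_j)p(\mathbf{x}_j) = Q(fp) = \langle f,p\rangle_Q$.

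An alternative route uses the first expression in \eqref{eq:hyperinterpolation}. Orthonormality of the $Y_{\ell,k}$ gives $\langle L_{n,Q}f, Y_{\ell,k}\rangle_{L^2} = \langle f, Y_{\ell,k}\rangle_Q$ for $\ell\le n$. Both sides are linear in $p$, so the identity extends by expanding $p$ in the basis $\{Y_{\ell,k}\}$ of $\mathbb{P}_n(\mathbb{S}^2)$.

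Neither route uses the signs of the weights or any exactness of $Q$, as the statement requires. There is no real obstacle here. The only point needing care is that $p$ must lie in $\mathbb{P}_n(\mathbb{S}^2)$, so that the reproducing property applies exactly.
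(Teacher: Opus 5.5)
Your proof is correct. The kernel computation (a finite sum, the symmetry of $G_n$, then \eqref{eq:reproducing-property} at each node $\mathbf{x}_j$) and the basis-expansion route are both valid, and neither uses the signs of the weights or any exactness of $Q$.

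The paper gives no in-text argument here and simply cites \cite[Lemma~2.1]{an2026optimizationapproachweightcollocation}. Your kernel route therefore makes the lemma self-contained, resting only on \cref{lem:Gn-properties}. Your two routes are really the same computation, since the reproducing property is just the orthonormality of the $Y_{\ell,k}$ written in kernel form.
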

\begin{proof}
    See \cite[Lemma~2.1]{an2026optimizationapproachweightcollocation}.
\end{proof}

\section{Existence of the \texorpdfstring{$L^1$--$L^2$}{L1-L2} Marcinkiewicz\texorpdfstring{--}{-}Zygmund Conditions} \label{sec:existence}
A primary goal of this paper is to establish the following $L^1$--$L^2$ MZ condition as the stability condition for hyperinterpolation. 

\begin{definition}[$L^1$--$L^2$ MZ Condition]
    Let $X = \{\mathbf{x}_1,\dots,\mathbf{x}_N\}\subset\mathbb{S}^2$ and $\mathbf{w}\in\mathbb{R}^N$. Let $Q$ be the quadrature formula associated with $X$ and $\mathbf{w}$. We say that $Q$ satisfies the \textit{$L^1$--$L^2$ MZ condition} of degree $n \in \mathbb{N}_0$ with constant $C > 0$ if 
    \begin{equation} \label{eq:L1-L2-MZ}
        \sum_{j=1}^N |w_j||p(\mathbf{x}_j)| \leq C \|p\|_{L^2(\mathbb{S}^2)} \quad \forall p\in\mathbb{P}_n(\mathbb{S}^2).
    \end{equation}
\end{definition}
This mixed-norm condition integrates the discrete $L^1$ structure of the quadrature formula with the continuous $L^2$ geometry of the polynomial space. We next verify that it is natural to derive $L^1$--$L^2$ MZ conditions from the classical MZ conditions with established existence.

\subsection{Marcinkiewicz\texorpdfstring{--}{-}Zygmund Conditions}
Let us recall the MZ conditions initially introduced in \cite{gia2009localized}.

\begin{definition} \label{def:Marcinkiewicz-Zygmund-Conditions}
    Let $X = \{\mathbf{x}_1,\dots,\mathbf{x}_N\}\subset\mathbb{S}^2$ and $\mathbf{w}\in\mathbb{R}^N$. Let $Q$ be the quadrature formula associated with $X$ and $\mathbf{w}$. We say that $Q$ satisfies
    \begin{enumerate}[noitemsep]
        \item [(1)] the \textit{$L^1$ MZ condition} of degree $n \in \mathbb{N}_0$ with constant $c_1 > 0$ if
        \begin{equation} \label{eq:L1-MZ}
            \sum_{j=1}^N |w_j||p(\mathbf{x}_j)| \leq c_1 \|p\|_{L^1(\mathbb{S}^2)} \quad \forall\, p\in\mathbb{P}_n(\mathbb{S}^2);
        \end{equation}
        \item [(2)] the \textit{$L^2$ MZ condition} of degree $n \in \mathbb{N}_0$ with constant $c_2>0$ if
        \begin{equation} \label{eq:L2-MZ}
            \sum_{j=1}^N |w_j||p(\mathbf{x}_j)|^2 \le c_2 \|p\|_{L^2(\mathbb{S}^2)}^2 \quad \forall\, p\in\mathbb{P}_n(\mathbb{S}^2).
        \end{equation}
    \end{enumerate}
\end{definition}

Below we show that either the $L^1$ or $L^2$ MZ condition implies the $L^1$--$L^2$ MZ condition \eqref{eq:L1-L2-MZ}.
\begin{lemma} \label{lem:L1-MZ-implies-L1-L2-MZ}
    A quadrature formula $Q$ satisfying the $L^1$ MZ condition of degree $n$ with constant $c_1 > 0$ also satisfies the $L^1$--$L^2$ MZ condition of degree $n$ with constant $C = \sqrt{4\pi} c_1$.
\end{lemma}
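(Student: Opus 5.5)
The argument is a one-line comparison of norms on the sphere. Starting from the $L^1$ MZ inequality \eqref{eq:L1-MZ}, we have for every $p\in\mathbb{P}_n(\mathbb{S}^2)$
\begin{equation*}
    \sum_{j=1}^N |w_j||p(\mathbf{x}_j)| \leq c_1 \|p\|_{L^1(\mathbb{S}^2)},
\end{equation*}
so it suffices to bound $\|p\|_{L^1(\mathbb{S}^2)}$ by a multiple of $\|p\|_{L^2(\mathbb{S}^2)}$.

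For this I will apply the Cauchy--Schwarz inequality in $L^2(\mathbb{S}^2)$ to the product $|p|\cdot e$, where $e\equiv 1$ is the constant polynomial fixed in \Cref{sec:preliminaries}. This gives
\begin{equation*}
    \|p\|_{L^1(\mathbb{S}^2)} = \int_{\mathbb{S}^2} |p(\mathbf{x})|\,e(\mathbf{x})\,d\omega(\mathbf{x}) \leq \|p\|_{L^2(\mathbb{S}^2)}\,\|e\|_{L^2(\mathbb{S}^2)} = \sqrt{4\pi}\,\|p\|_{L^2(\mathbb{S}^2)}.
\end{equation*}
Here I use the normalization $|\mathbb{S}^2| = 4\pi$, which yields $\|e\|_{L^2(\mathbb{S}^2)}=\sqrt{4\pi}$.

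Combining the two displays gives \eqref{eq:L1-L2-MZ} with $C=\sqrt{4\pi}\,c_1$, for the same degree $n$ and the same nodes and weights. No step is a real obstacle. The only point to check is that the constant comes from the chosen normalization of $\omega$, and that the argument uses neither positivity of the weights nor any exactness of $Q$. This is consistent with the general signed-weight setting of the paper.
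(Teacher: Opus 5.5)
Your proof is correct and is the same as the paper's: the Cauchy--Schwarz inequality against the constant function $e$ gives $\|p\|_{L^1(\mathbb{S}^2)} \leq \sqrt{4\pi}\,\|p\|_{L^2(\mathbb{S}^2)}$, and substituting this into the $L^1$ MZ inequality yields the constant $C=\sqrt{4\pi}\,c_1$. Your observation that the argument uses neither positivity of the weights nor exactness of $Q$ is also accurate.
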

\begin{proof}
    By the Cauchy--Schwarz inequality, we have $\|p\|_{L^1(\mathbb{S}^2)} \leq \sqrt{4\pi}\|p\|_{L^2(\mathbb{S}^2)}$ for all $p \in \mathbb{P}_n(\mathbb{S}^2)$. Substituting this inequality into the $L^1$ MZ condition completes the proof.
\end{proof}

\begin{lemma} \label{lem:L2-MZ-implies-L1-L2-MZ}
    A quadrature formula $Q$ satisfying the $L^2$ MZ condition of degree $n$ with constant $c_2 > 0$ also satisfies the $L^1$--$L^2$ MZ condition of degree $n$ with constant $C = \sqrt{4\pi} c_2$.
\end{lemma}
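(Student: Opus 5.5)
The plan is a weighted Cauchy--Schwarz argument, with the $L^2$ MZ condition \eqref{eq:L2-MZ} used twice: once for $p$ itself, and once for the constant polynomial $e$ to control the total mass $\sum_j |w_j|$.

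First, for any $p \in \mathbb{P}_n(\mathbb{S}^2)$, write $|w_j||p(\mathbf{x}_j)| = |w_j|^{1/2}\cdot |w_j|^{1/2}|p(\mathbf{x}_j)|$. Applying the Cauchy--Schwarz inequality in $\mathbb{R}^N$ gives
\begin{equation*}
    \sum_{j=1}^N |w_j||p(\mathbf{x}_j)| \leq \Bigl(\sum_{j=1}^N |w_j|\Bigr)^{1/2}\Bigl(\sum_{j=1}^N |w_j||p(\mathbf{x}_j)|^2\Bigr)^{1/2}.
\end{equation*}
By \eqref{eq:L2-MZ}, the second factor is at most $\sqrt{c_2}\,\|p\|_{L^2(\mathbb{S}^2)}$.

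Second, to bound $\sum_j |w_j| = \|\mathbf{w}\|_1$, I would apply \eqref{eq:L2-MZ} to the constant polynomial $e \equiv 1$. This polynomial lies in $\mathbb{P}_0(\mathbb{S}^2) \subseteq \mathbb{P}_n(\mathbb{S}^2)$ for every $n \in \mathbb{N}_0$. Since $\|e\|_{L^2(\mathbb{S}^2)}^2 = 4\pi$, this yields $\sum_j |w_j| \leq 4\pi c_2$, so the first factor is at most $\sqrt{4\pi c_2}$.

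Multiplying the two bounds gives $\sum_j |w_j||p(\mathbf{x}_j)| \leq \sqrt{4\pi c_2}\,\sqrt{c_2}\,\|p\|_{L^2(\mathbb{S}^2)} = \sqrt{4\pi}\,c_2\,\|p\|_{L^2(\mathbb{S}^2)}$, which is \eqref{eq:L1-L2-MZ} with $C = \sqrt{4\pi}c_2$. There is no real obstacle. The only point needing care is noticing that the $\ell^1$ norm of the weights is itself controlled by the MZ condition through the choice $p = e$, rather than being an extra assumption.
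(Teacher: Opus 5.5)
Your proposal is correct and is essentially the paper's own proof: the weighted Cauchy--Schwarz split $|w_j|^{1/2}\cdot|w_j|^{1/2}|p(\mathbf{x}_j)|$, with the $L^2$ MZ condition applied to $e$ to bound $\sum_j|w_j|\le 4\pi c_2$ and to $p$ for the other factor, giving $C=\sqrt{4\pi}\,c_2$. Your explicit remark that $e\in\mathbb{P}_0(\mathbb{S}^2)\subseteq\mathbb{P}_n(\mathbb{S}^2)$ is the one point the paper leaves implicit.
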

\begin{proof}
    For any $p \in \mathbb{P}_n(\mathbb{S}^2)$, by the Cauchy--Schwarz inequality and the $L^2$ MZ condition, we have
    \begin{equation*}
        \sum_{j=1}^N |w_j||p(\mathbf{x}_j)| \leq \left(\sum_{j=1}^N |w_j|\right)^{1/2} \left(\sum_{j=1}^N |w_j||p(\mathbf{x}_j)|^2\right)^{1/2} \leq \sqrt{c_2 \|e\|_{L^2(\mathbb{S}^2)}^2} \sqrt{c_2 \|p\|_{L^2(\mathbb{S}^2)}^2} = \sqrt{4\pi}c_2\|p\|_{L^2(\mathbb{S}^2)},
    \end{equation*}
    and the proof is complete.
\end{proof}
Thus, according to \cref{lem:L1-MZ-implies-L1-L2-MZ,lem:L2-MZ-implies-L1-L2-MZ}, the existence of either the $L^1$ or $L^2$ MZ condition yields the existence of the $L^1$--$L^2$ MZ condition \eqref{eq:L1-L2-MZ}.

\subsection{From Classical MZ Theory to the \texorpdfstring{$L^1$--$L^2$}{L1-L2} MZ Condition}
\label{sec:classical-to-L2}
The study of MZ conditions on the sphere was initiated in the foundational paper \cite{mhaskar2001spherical}, where MZ inequalities were established for quadrature formulas associated with compatible partitions of the sphere. We now recall the relevant definitions.

\begin{definition}[\!\!\protect{\cite[Definition~3.1]{mhaskar2001spherical}}]
    A finite collection $\mathcal{R} := \{R_1, R_2, \ldots, R_N\}$ of closed, nonoverlapping (i.e., having no common interior points) subsets such that $\mathbb{S}^2 = \bigcup_{j=1}^N R_j$ is called a \textit{partition} of $\mathbb{S}^2$. We say that $\mathcal{R}$ is \textit{$X$-compatible} if each \textit{patch} $R_j \in \mathcal{R}$ contains exactly one point $\mathbf{x}_j \in X$ in its interior. The \textit{partition weight} of $\mathcal{R}$ is the vector $\mathbf{r} = (\omega(R_1), \omega(R_2), \ldots, \omega(R_N))^\top \in \mathbb{R}^N$. The \textit{partition norm} of $\mathcal{R}$ is defined by its largest patch diameter: $\|\mathcal{R}\| := \max_{R_j \in \mathcal{R}} \mathrm{diam}\, R_j$, where $\mathrm{diam}\, S := \sup_{\mathbf{x}, \mathbf{y} \in S} \mathrm{dist}(\mathbf{x}, \mathbf{y})$.
\end{definition}

Since quadrature formulas satisfying the $L^1$ (or $L^2$) MZ condition naturally satisfy the $L^1$--$L^2$ MZ condition (\ref{eq:L1-L2-MZ}), we obtain the following existence theorem.
\begin{theorem}[Existence of $L^1$--$L^2$ MZ Conditions]
\label{thm:existence-of-L1-L2-MZ}
Let $\mathcal{R}$ be an $X$-compatible partition of $\mathbb{S}^2$ with partition weight $\mathbf{r}$. If $\eta \in (0, 1)$ and the partition norm $\|\mathcal{R}\|$ satisfies
\begin{equation} \label{eq:geometric-condition}
    \|\mathcal{R}\| < \frac{\eta}{n} \varrho
\end{equation}
where $\varrho>0$ is a fixed constant, such as $\varrho_2$ defined in \cite[(3.25)]{mhaskar2001spherical} or the constant determined by $C_2$ defined in \cite[Theorem~4.1]{filbir2024marcinkiewicz}. Then, $Q_{X, \mathbf{r}}$ satisfies the $L^1$--$L^2$ MZ condition of degree $n$ with constant $C=\sqrt{4\pi}(1+\eta)$.
\end{theorem}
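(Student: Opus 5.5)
The plan is to invoke the classical Mhaskar--Narcowich--Ward result, or its refinement by Filbir et al., to get the $L^1$ MZ inequality for $Q_{X,\mathbf{r}}$, and then pass to the mixed norm via \cref{lem:L1-MZ-implies-L1-L2-MZ}.

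The classical statement \cite[Theorem~3.1 with (3.25)]{mhaskar2001spherical} is a two-sided $L^1$ estimate. Under the geometric condition \eqref{eq:geometric-condition}, with $\varrho=\varrho_2$, it gives for all $p\in\mathbb{P}_n(\mathbb{S}^2)$
\begin{equation*}
    (1-\eta)\|p\|_{L^1(\mathbb{S}^2)} \le \sum_{j=1}^N \omega(R_j)|p(\mathbf{x}_j)| \le (1+\eta)\|p\|_{L^1(\mathbb{S}^2)}.
\end{equation*}
The same holds with the constant furnished by \cite[Theorem~4.1]{filbir2024marcinkiewicz}.

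Behind this estimate is the standard mechanism. One writes
\begin{equation*}
    \sum_j \omega(R_j)|p(\mathbf{x}_j)| - \|p\|_{L^1} = \sum_j \int_{R_j}\bigl(|p(\mathbf{x}_j)|-|p(\mathbf{y})|\bigr)\,d\omega(\mathbf{y}).
\end{equation*}
Each term is bounded by the local oscillation of $p$ over $R_j$. A Nikolskii/Bernstein-type maximal inequality then controls the integrated oscillation by $C n\|\mathcal{R}\|\,\|p\|_{L^1}$, and this is at most $\eta\|p\|_{L^1}$ under \eqref{eq:geometric-condition}. I would not reprove this; it is cited.

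The only part needed is the upper bound. Since the partition weights are positive, $|w_j|=\omega(R_j)$, so it reads
\begin{equation*}
    \sum_j |w_j||p(\mathbf{x}_j)|\le(1+\eta)\|p\|_{L^1(\mathbb{S}^2)}.
\end{equation*}
This is exactly the $L^1$ MZ condition \eqref{eq:L1-MZ} of degree $n$ with $c_1=1+\eta$. Applying \cref{lem:L1-MZ-implies-L1-L2-MZ}, which is Cauchy--Schwarz in the form $\|p\|_{L^1}\le\sqrt{4\pi}\|p\|_{L^2}$, gives \eqref{eq:L1-L2-MZ} with $C=\sqrt{4\pi}(1+\eta)$.

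The main point needing care is matching hypotheses with the cited theorems. The constant $\varrho_2$ in \cite{mhaskar2001spherical} is tailored to $p=1$ and is stated there for $\mathbb{S}^q$ with normalized measure, while the Filbir et al.\ constant $C_2$ may be phrased with a different normalization of $\|\mathcal{R}\|$ or of the degree. So I would check that our normalization $|\mathbb{S}^2|=4\pi$ does not change the multiplicative form of the estimate. Both sides scale identically in the measure, so the relative error $\eta$ is unaffected.
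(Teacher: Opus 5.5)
Your proposal is correct and matches the paper's proof: the paper likewise takes the $L^1$ case of \cite[Theorem~3.1]{mhaskar2001spherical} under \eqref{eq:geometric-condition} to get the $L^1$ MZ condition with $c_1=1+\eta$ for the positive partition weights $\omega(R_j)$, and then applies \cref{lem:L1-MZ-implies-L1-L2-MZ} to obtain $C=\sqrt{4\pi}(1+\eta)$. Your sketch of the oscillation mechanism and your normalization check are harmless extra commentary, since the paper simply cites the result.
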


\begin{proof}
Under the condition \eqref{eq:geometric-condition}, the $L^1$ version of \cite[Theorem~3.1]{mhaskar2001spherical} asserts that $Q_{X, \mathbf{r}}$ satisfies the $L^1$ MZ condition of degree $n$ with constant $c_1 = 1+\eta$. It remains to use \cref{lem:L1-MZ-implies-L1-L2-MZ}.
\end{proof}

\begin{remark}
The same argument applies if one instead uses the $L^2$ version of \cite[Theorem~3.1]{mhaskar2001spherical} together with \cref{lem:L2-MZ-implies-L1-L2-MZ}. The resulting constant remains $C=\sqrt{4\pi}(1+\eta)$.
\end{remark}

\section{Quadrature Convergence Does Not Imply Hyperinterpolation Convergence} \label{sec:does-not-imply}

Classical convergence theory of numerical integration treats quadrature formulas as bounded linear functionals on $C(\mathbb{S}^2)$ and characterizes convergence by accuracy on a dense subspace together with uniform boundedness of the functional norms; see \cite{polya1933konvergenz} and the functional-analytic formulation in \cite[Theorem~12.4]{kress2014linear}. Since hyperinterpolation is obtained from the orthogonal projection by replacing the $L^2$ inner products with quadrature evaluations, it is intuitive to conjecture that the convergence of the underlying quadrature formulas is sufficient to ensure the $L^2$ convergence of their associated hyperinterpolation operators. In this section, however, we construct a convergent sequence of quadrature formulas whose associated hyperinterpolation operators do not converge in the $L^2$ sense. Thus, quadrature convergence alone does not imply the $L^2$ convergence of hyperinterpolation.

\subsection{P\texorpdfstring{\'o}{ó}lya's Conditions on Quadrature Convergence}

\begin{definition}[Uniform P\'olya Stability]
\label{def:polya-stability}
A sequence of quadrature formulas $\{Q_n\}_{n\in\mathbb{N}}$ is said to satisfy the \textit{uniform P\'olya stability condition} if there exists a constant $M>0$ independent of $n$ such that
\begin{equation}\label{eq:polya-cond}
    \sup_{n\in\mathbb{N}} \|\mathbf{w}^{(n)}\|_1
    \leq M.
\end{equation}
\end{definition}

\begin{remark}\label{rem:quadrature-stability}
By \eqref{eq:quadrature-functional-norm}, $\|Q_n\|_{C(\mathbb{S}^2) \to \mathbb{R}} = \|\mathbf{w}^{(n)}\|_1$. The substantive requirement in \eqref{eq:polya-cond} is therefore the uniform boundedness of the operator norms of the quadrature functionals
\begin{equation*}
    \sup_{n \in \mathbb{N}} \|Q_n\|_{C(\mathbb{S}^2) \to \mathbb{R}} \leq M.
\end{equation*}
\end{remark}

The uniform P\'olya stability condition implies that each $Q_n$ satisfies the $L^1$--$L^2$ MZ condition of degree $0$ with constant $M$. It gives no uniform control on higher degree polynomials. As will be shown in \cref{prop:Polya-does-not-imply-L1-L2-MZ}, this distinction is precisely why the uniform boundedness of the operator norms of quadrature functionals does not imply that of the associated hyperinterpolation operators.

\begin{lemma}\label{lem:L1-L2-MZ-implies-Polya}
Let $\{Q_n\}_{n \in \mathbb{N}}$ be a sequence of quadrature formulas. Suppose each $Q_n$ satisfies the $L^1$--$L^2$ MZ condition of degree $n$ with constant $C > 0$ independent of $n$, then $\|Q_n\|_{C(\mathbb{S}^2) \to \mathbb{R}} \leq \sqrt{4\pi} C$. Consequently, $\{Q_n\}_{n\in\mathbb{N}}$ satisfies the uniform P\'olya stability condition with constant $M=\sqrt{4\pi}C$.
\end{lemma}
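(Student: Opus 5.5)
The plan is to test the $L^1$--$L^2$ MZ inequality \eqref{eq:L1-L2-MZ} with the simplest admissible polynomial, the constant $e \in \mathbb{P}_0(\mathbb{S}^2)$. Since $\mathbb{P}_0(\mathbb{S}^2) \subseteq \mathbb{P}_n(\mathbb{S}^2)$ for every $n \in \mathbb{N}$, the hypothesis that $Q_n$ satisfies the $L^1$--$L^2$ MZ condition of degree $n$ with constant $C$ applies to $p = e$.

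Concretely, I would first note that $|e(\mathbf{x}_j^{(n)})| = 1$ for every node. The left-hand side of \eqref{eq:L1-L2-MZ} is therefore exactly $\sum_{j=1}^{N_n} |w_j^{(n)}| = \|\mathbf{w}^{(n)}\|_1$. Next, I would compute the right-hand side using the normalization $|\mathbb{S}^2| = 4\pi$, which gives $\|e\|_{L^2(\mathbb{S}^2)} = \sqrt{4\pi}$. Combining the two yields $\|\mathbf{w}^{(n)}\|_1 \leq \sqrt{4\pi}\,C$ for every $n$.

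Finally, I would invoke \eqref{eq:quadrature-functional-norm}, or equivalently \cref{rem:quadrature-stability}, to identify $\|\mathbf{w}^{(n)}\|_1$ with $\|Q_n\|_{C(\mathbb{S}^2)\to\mathbb{R}}$. Because $C$ does not depend on $n$, taking the supremum over $n$ gives \eqref{eq:polya-cond} with $M = \sqrt{4\pi}\,C$.

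There is no real obstacle here. The only point needing care is that the degree-$n$ condition covers degree-$0$ polynomials; this holds because the polynomial spaces are nested. This matches the paper's earlier remark that uniform P\'olya stability is essentially the degree-$0$ instance of the $L^1$--$L^2$ MZ condition.
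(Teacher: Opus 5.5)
Your proposal is correct and is essentially the paper's proof: test \eqref{eq:L1-L2-MZ} with the constant polynomial $e\in\mathbb{P}_0(\mathbb{S}^2)\subseteq\mathbb{P}_n(\mathbb{S}^2)$ to get $\|\mathbf{w}^{(n)}\|_1\leq C\|e\|_{L^2(\mathbb{S}^2)}=\sqrt{4\pi}\,C$. Then identify $\|\mathbf{w}^{(n)}\|_1$ with $\|Q_n\|_{C(\mathbb{S}^2)\to\mathbb{R}}$ via \eqref{eq:quadrature-functional-norm}.
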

\begin{proof}
Using the norm identity \eqref{eq:quadrature-functional-norm} and taking $e \in \mathbb{P}_0(\mathbb{S}^2)$ in the $L^1$--$L^2$ MZ condition give
\begin{equation*}
    \|Q_n\|_{C(\mathbb{S}^2) \to \mathbb{R}} = \sum_{j=1}^{N}|w_j^{(n)}| \leq C\|e\|_{L^2(\mathbb{S}^2)} = \sqrt{4\pi}C.
\end{equation*}
The proof is complete.
\end{proof}

Let us recall P\'olya's classical theorem on quadrature convergence \cite{polya1933konvergenz}, in its natural formulation on the unit sphere; see also \cite[Theorem~12.4]{kress2014linear}.
\begin{theorem}[P\'olya's Theorem on Quadrature Convergence
\protect{\cite{polya1933konvergenz}}]\label{thm:polya}
Let $\{Q_n\}_{n\in\mathbb{N}}$ be a sequence of quadrature formulas. Then, $\{Q_n\}_{n \in \mathbb{N}}$ converges for every continuous function on the sphere, that is,
\begin{equation*}
    \lim_{n\to\infty}|Q_n(f)-I(f)|=0
    \quad \forall\,f\in C(\mathbb{S}^2)
\end{equation*}
if and only if $\{Q_n\}_{n \in \mathbb{N}}$ satisfies both of the following conditions:
\begin{enumerate}[(1),noitemsep]
    \item the uniform P\'olya stability condition \eqref{eq:polya-cond}; and
    \item the asymptotic approximation property for polynomials,
    \begin{equation}\label{eq:approximation-property}
        \lim_{n\to\infty}|Q_n(p)-I(p)|=0
        \quad \forall\,p\in\mathbb{P}(\mathbb{S}^2).
    \end{equation}
\end{enumerate}
\end{theorem}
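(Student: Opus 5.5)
The statement is the classical Banach--Steinhaus characterization of convergence of a sequence of bounded linear functionals on the Banach space $C(\mathbb{S}^2)$. I will prove the two directions separately. Throughout I use the identity $\|Q_n\|_{C(\mathbb{S}^2)\to\mathbb{R}} = \|\mathbf{w}^{(n)}\|_1$ from \eqref{eq:quadrature-functional-norm}. I also use the density of $\mathbb{P}(\mathbb{S}^2)$ in $C(\mathbb{S}^2)$ with respect to $\|\cdot\|_\infty$. This density follows from the Stone--Weierstrass theorem: restrictions of polynomials on $\mathbb{R}^3$ form a unital subalgebra separating points, and these restrictions are exactly the spherical polynomials.

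\emph{Necessity.} Suppose $Q_n(f)\to I(f)$ for every $f\in C(\mathbb{S}^2)$. Taking $f=p\in\mathbb{P}(\mathbb{S}^2)$ immediately gives \eqref{eq:approximation-property}. For stability, note that for each fixed $f$ the sequence $\{Q_n(f)\}_{n}$ is convergent, hence bounded: $\sup_n |Q_n(f)|<\infty$. Since $C(\mathbb{S}^2)$ is a Banach space and each $Q_n$ is a bounded linear functional, the uniform boundedness principle gives $\sup_n \|Q_n\|_{C(\mathbb{S}^2)\to\mathbb{R}}<\infty$. By \cref{rem:quadrature-stability}, this is exactly \eqref{eq:polya-cond}.

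\emph{Sufficiency.} Assume \eqref{eq:polya-cond} with constant $M$ and \eqref{eq:approximation-property}. Fix $f\in C(\mathbb{S}^2)$ and $\varepsilon>0$, and use density to choose $p\in\mathbb{P}(\mathbb{S}^2)$ with $\|f-p\|_\infty<\varepsilon$. I split the error as
\begin{equation*}
|Q_n(f)-I(f)| \le |Q_n(f-p)| + |Q_n(p)-I(p)| + |I(p-f)| \le M\varepsilon + |Q_n(p)-I(p)| + 4\pi\varepsilon.
\end{equation*}
Letting $n\to\infty$ yields $\limsup_n |Q_n(f)-I(f)|\le (M+4\pi)\varepsilon$. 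Since $\varepsilon$ is arbitrary, the limit is zero.

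The only step with genuine content is the application of the uniform boundedness principle in the necessity direction, which requires completeness of $C(\mathbb{S}^2)$. Everything else is routine. The density of spherical polynomials is standard and I will simply cite Stone--Weierstrass for it.
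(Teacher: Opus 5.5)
Your proof is correct and takes the same route as the paper's argument, which is the standard Banach--Steinhaus proof the paper attributes to P\'olya and to \cite[Theorem~12.4]{kress2014linear}. For necessity you use the uniform boundedness principle on $C(\mathbb{S}^2)$ together with $\|Q_n\|_{C(\mathbb{S}^2)\to\mathbb{R}}=\|\mathbf{w}^{(n)}\|_1$; for sufficiency you use density of $\mathbb{P}(\mathbb{S}^2)$ and the three-term splitting bounded by $(M+4\pi)\|f-p\|_\infty+|Q_n(p)-I(p)|$.
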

% \begin{proof}
% We first prove the necessity. Suppose $\lim_{n \to \infty} |Q_n(f) - I(f)| = 0$ for all $f\in C(\mathbb{S}^2)$. Then \eqref{eq:approximation-property} is immediate. Moreover, for every fixed $f\in C(\mathbb{S}^2)$, the sequence $\{Q_n(f)\}$ is bounded. The uniform boundedness principle therefore implies
% \begin{equation*}
%     \sup_{n\in\mathbb{N}}\|Q_n\|_{C\to\mathbb{R}}<\infty,
% \end{equation*}
% which is precisely \eqref{eq:polya-cond}.

% We next prove the sufficiency. Suppose \eqref{eq:approximation-property} and
% \eqref{eq:polya-cond} hold. Fix $f\in C(\mathbb{S}^2)$. Given any $\epsilon>0$, since
% $\mathbb{P}(\mathbb{S}^2)$ is dense in $C(\mathbb{S}^2)$, we can choose
% $p\in\mathbb{P}(\mathbb{S}^2)$ such that
% \begin{equation*}
%     \|f-p\|_\infty<\frac{\epsilon}{2(M+4\pi)}.
% \end{equation*}
% Since
% $\|I\|_{C\to\mathbb{R}}=I(1)=4\pi$, we obtain
% \begin{align*}
%     |Q_n(f)-I(f)|
%     &\leq |Q_n(f-p)|+|Q_n(p)-I(p)|+|I(p-f)| \\
%     &\leq (M+4\pi)\|f-p\|_\infty+|Q_n(p)-I(p)|
% \end{align*}
% For all sufficiently large $n$, \eqref{eq:approximation-property} gives
% $|Q_n(p)-I(p)|<\epsilon/2$, and hence $|Q_n(f) - I(f)| \leq \epsilon$. Thus $\lim_{n \to \infty} |Q_n(f) - I(f)| = 0$ for every $f\in C(\mathbb{S}^2)$.
% \end{proof}

The uniform P\'olya stability condition serves as a stability condition that guarantees the stability of quadrature functionals, while the asymptotic approximation property for polynomials is an accuracy condition on the dense subspace $\mathbb{P}(\mathbb{S}^2)$ of $C(\mathbb{S}^2)$.

\subsection{Divergence of Hyperinterpolation Under P\texorpdfstring{\'o}{ó}lya's Conditions}
We now construct a sequence of quadrature formulas satisfying P\'olya's conditions but for which $\|L_n\|_{C(\mathbb{S}^2) \to L^2(\mathbb{S}^2)}$ diverges. This will in turn imply the existence of a continuous function for which hyperinterpolation does not converge in $L^2$.

\begin{theorem} \label{thm:hyper-2-neg}
There exists a sequence $\{Q_n\}_{n\in\mathbb{N}}$ of quadrature formulas satisfying the uniform P\'olya stability condition \eqref{eq:polya-cond} and the asymptotic approximation property for polynomials \eqref{eq:approximation-property}, such that the associated degree-$n$ hyperinterpolation operators $L_n:=L_{n,Q_n}$ satisfy
\begin{equation*}
    \lim_{n\to\infty}\|L_n\|_{C(\mathbb{S}^2)\to L^2(\mathbb{S}^2)}=\infty.
\end{equation*}
Consequently, there exists $f\in C(\mathbb{S}^2)$ such that
$\limsup_{n\to\infty}\|L_nf\|_{L^2(\mathbb{S}^2)}=\infty$.
\end{theorem}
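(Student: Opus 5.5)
The plan is to take a classical, well-behaved hyperinterpolation quadrature and perturb it by a single extra atom. The atom's weight goes to zero, which is invisible to P\'olya's conditions. Its effect on $L_n$, however, is amplified by the size $\|G_n(\cdot,\mathbf{y})\|_{L^2(\mathbb{S}^2)}\sim n$ of the reproducing kernel.

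\textbf{Construction.} For each $n$, let $X^{(n)}_0=\{\mathbf{x}_1,\dots,\mathbf{x}_{M_n}\}$ be a spherical $2n$-design, which exists by \cite{bondarenko2013optimal}. Its equal-weight quadrature $Q_n^0$ has positive weights $4\pi/M_n$ and is exact of degree $2n$. Fix a point $\mathbf{y}_n\notin X^{(n)}_0$ and a weight $\delta_n:=n^{-1/2}$. Define
\begin{equation*}
Q_n(f):=Q_n^0(f)+\delta_n f(\mathbf{y}_n).
\end{equation*}
If one prefers not to choose $\mathbf{y}_n$ outside the design, one may instead add $\delta_n$ to an existing weight; the argument below is unchanged.

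\textbf{P\'olya's conditions.} Stability holds because $\|\mathbf{w}^{(n)}\|_1=4\pi+\delta_n\le 4\pi+1$. For the approximation property, fix $p\in\mathbb{P}(\mathbb{S}^2)$. Once $n\ge\deg p$, exactness of $Q_n^0$ gives
\begin{equation*}
|Q_n(p)-I(p)|=\delta_n|p(\mathbf{y}_n)|\le n^{-1/2}\|p\|_\infty\to0 .
\end{equation*}
Thus \eqref{eq:polya-cond} and \eqref{eq:approximation-property} both hold.

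\textbf{Divergence of the operator norms.} By linearity of \eqref{eq:hyperinterpolation} in the quadrature,
\begin{equation*}
L_nf=L_{n,Q_n^0}f+\delta_n f(\mathbf{y}_n)\,G_n(\cdot,\mathbf{y}_n).
\end{equation*}
Test this with $f=e$, for which $\|e\|_\infty=1$. Since $Q_n^0$ is exact of degree $2n$ and $e\in\mathbb{P}_n(\mathbb{S}^2)$, we have $L_{n,Q_n^0}e=e$. Hence
\begin{equation*}
L_ne=e+\delta_n G_n(\cdot,\mathbf{y}_n).
\end{equation*}
Applying the reproducing property \eqref{eq:reproducing-property} to $p=G_n(\cdot,\mathbf{y}_n)$ and then \cref{lem:Gn-properties}(1) gives
\begin{equation*}
\|G_n(\cdot,\mathbf{y}_n)\|_{L^2(\mathbb{S}^2)}^2=G_n(\mathbf{y}_n,\mathbf{y}_n)=\frac{(n+1)^2}{4\pi}.
\end{equation*}
By the triangle inequality,
\begin{equation*}
\|L_n\|_{C(\mathbb{S}^2)\to L^2(\mathbb{S}^2)}\ \ge\ \|L_ne\|_{L^2(\mathbb{S}^2)}\ \ge\ \frac{n^{-1/2}(n+1)}{\sqrt{4\pi}}-\sqrt{4\pi}\ \longrightarrow\ \infty .
\end{equation*}

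\textbf{Existence of a bad function.} The last claim follows from the uniform boundedness principle. Suppose $\sup_n\|L_nf\|_{L^2(\mathbb{S}^2)}<\infty$ for every $f\in C(\mathbb{S}^2)$. Then the operator norms would be uniformly bounded, contradicting the divergence just shown. Hence some $f\in C(\mathbb{S}^2)$ satisfies $\limsup_n\|L_nf\|_{L^2(\mathbb{S}^2)}=\infty$. In this construction $f=e$ already works directly.

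\textbf{Main obstacle.} The only delicate point is balancing the scaling. The weight $\delta_n$ must tend to zero so that the approximation property survives. At the same time, $\delta_n\sqrt{G_n(\mathbf{y}_n,\mathbf{y}_n)}\asymp\delta_n n$ must blow up. Any choice with $\delta_n\to0$ and $n\delta_n\to\infty$ works, and $\delta_n=n^{-1/2}$ satisfies both requirements.
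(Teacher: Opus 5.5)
Your proof is correct. It reaches the statement by a genuinely different and more elementary route than the paper.

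The paper adds a perturbation of fixed $\ell^1$-mass $1$. To an equal-weight spherical $n$-design it adds a null rule $\sum_j u_j^{(n)} f(\mathbf{x}_j^{(n)})$ that annihilates $\mathbb{P}_n(\mathbb{S}^2)$ and is supported in a tiny cap around $\mathbf{n}$, so every $Q_n$ is exact of degree $n$. Divergence is then detected by a sign-matching bump function $f_n$. It turns the signed null rule into a positive measure of mass $1$ near $\mathbf{n}$, so $L_nf_n$ is $L^2$-close to $G_n(\cdot,\mathbf{n})$. The bad continuous function is then obtained only abstractly, from the uniform boundedness principle.

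You instead add one atom of vanishing mass $\delta_n$. This gives up exactness entirely (already $Q_n(e)=4\pi+\delta_n$), but everything becomes explicit. The test function is $e$, no bump functions or null-space argument are needed, and the final claim holds for $f=e$ with an actual limit.

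What the paper's construction buys is sharpness and a cleaner separation of mechanisms. Because its $Q_n$ is exact of degree $n$, one has $L_ne=e$, so the blow-up is a pure stability phenomenon invisible on constants. Because the perturbing mass does not vanish, it attains $\|L_n\|_{C(\mathbb{S}^2)\to L^2(\mathbb{S}^2)}\gtrsim n$. This is the largest order compatible with \eqref{eq:polya-cond}, since always $\|L_n\|_{C(\mathbb{S}^2)\to L^2(\mathbb{S}^2)}\le M(n+1)/\sqrt{4\pi}$. In your example, $\|L_{n,Q_n^0}\|_{C(\mathbb{S}^2)\to L^2(\mathbb{S}^2)}\le\sqrt{4\pi}$ by the classical bound for positive rules exact of degree $2n$, so the growth is only of order $n\delta_n=o(n)$.

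Conversely, your example violates the functional approximation property dramatically, with $\|L_ne-e\|_{L^2(\mathbb{S}^2)}\to\infty$. It therefore also gives a stronger version of \cref{prop:AP-does-not-imply-FAP}.
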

\begin{proof}
    Let $\{T^{(n)}\}_{n\in \mathbb{N}}$ be a sequence of spherical $n$-designs consisting of $t_{n}$ points. Applying rotations if necessary, we assume each $T^{(n)}$ does not contain the north pole $\mathbf{n} = (0,0,1)^{\top}$.
    
    The properties of $G_n$ in \cref{lem:Gn-properties} imply, for any $\mathbf{x} \in \mathbb{S}^2$, that
    \begin{equation*}
       \|G_n(\cdot, \mathbf{x}) - G_n(\cdot, \mathbf{n})\|_{L^2(\mathbb{S}^2)}^2 = G_n(\mathbf{x}, \mathbf{x}) - 2G_n(\mathbf{x}, \mathbf{n}) + G_n(\mathbf{n}, \mathbf{n}) = 2G_n(\mathbf{x}, \mathbf{n}) - 2G_n(\mathbf{n}, \mathbf{n}).
    \end{equation*}
    The continuity of $G_n$ therefore implies that there exists $\rho_n > 0$ such that $\|G_n(\cdot, \mathbf{x})-G_n(\cdot,\mathbf{n})\|_{L^2(\mathbb{S}^2)} \leq 1$ whenever $\mathrm{dist}(\mathbf{x}, \mathbf{n}) \leq \rho_n$.
   
    Let $s_n := (n+1)^2+1$. Choose distinct points $S^{(n)} :=\{\mathbf{x}_1^{(n)},\dots,\mathbf{x}_{s_n}^{(n)}\}$ inside a spherical cap $\{\mathbf{x}\in\mathbb{S}^2: \mathrm{dist}(\mathbf{x},\mathbf{n}) \leq \delta_n\} $ with $\delta_n := \min\{\mathrm{dist}(\mathbf{n}, T^{(n)})/2, \rho_n\}$. 
    Since $\dim\mathbb{P}_n(\mathbb{S}^2) = (n+1)^2<s_n$, the matrix 
    \begin{equation} \label{eq:basis-matrix}
        \mathbf{Y}_n := [Y_{\ell,k}(\mathbf{x}_j^{(n)})
        ]_{\substack{0\leq \ell\leq n,\; |k| \leq \ell\\
        1\leq j\leq s_n}}
        \in \mathbb{R}^{(n+1)^2\times s_n}
    \end{equation}
    has a nontrivial nullspace. Choose a nonzero vector $\mathbf{u}^{(n)}$ in the nullspace of $\mathbf{Y}_n$ and normalize it so that $\|\mathbf{u}^{(n)}\|_1=1$. The resulting quadrature formula $Q_{S^{(n)}, \mathbf{u}^{(n)}}$ integrates all $p \in \mathbb{P}_n(\mathbb{S}^2)$ to zero.

    Now, we define $Q_n$ as the sum of the equal-weight quadrature formula based on $T^{(n)}$ and the null quadrature formula $Q_{S^{(n)}, \mathbf{u}^{(n)}}$,
    \begin{equation*}
        Q_n(f) := \frac{4\pi}{t_n}\sum_{\mathbf{t}\in T^{(n)}} f(\mathbf{t}) + \sum_{j=1}^{s_n} u_j^{(n)} f(\mathbf{x}_j^{(n)}).
    \end{equation*}
    By construction, $Q_n$ is exact of degree $n$. Moreover, it holds that its weight $\mathbf{w}^{(n)}$ satisfies
    \begin{equation*}
        \|\mathbf{w}^{(n)}\|_1 = \sum_{\mathbf{t} \in T^{(n)}} \frac{4\pi}{t_n} + \sum_{j=1}^{s_n} |u_j^{(n)}| = 4\pi + 1.
    \end{equation*}
    Therefore, the sequence $\{Q_n\}_{n \in \mathbb{N}}$ satisfies both the uniform P\'olya stability condition \eqref{eq:polya-cond} with constant $M=4\pi+1$ and the asymptotic approximation property for polynomials \eqref{eq:approximation-property}.

    Since $X^{(n)} = T^{(n)} \cup S^{(n)}$ is a finite non-singleton set of distinct points, it follows that
    \begin{equation*}
        q_n := \min_{\substack{\mathbf{x}, \mathbf{y} \in X^{(n)} \\ \mathbf{x} \neq \mathbf{y}}} \mathrm{dist}(\mathbf{x}, \mathbf{y}) > 0.
    \end{equation*}
    For $j = 1, \ldots, s_n$, let us define the auxiliary functions $\psi_{j,n}(\mathbf{x}) := \max\{1- 3\mathrm{dist}(\mathbf{x}, \mathbf{x}_j^{(n)})/q_n, 0\}$. By construction, the supports of $\psi_{j, n}$ are pairwise disjoint. Consider now $f_n \in C(\mathbb{S}^2)$ given by
    \begin{equation*}
        f_n(\mathbf{x}) := \sum_{j=1}^{s_n} \mathrm{sgn}(u_j^{(n)})\psi_{j,n}(\mathbf{x}),
    \end{equation*}
    with the convention that $\mathrm{sgn}(0) = 0$. Then, $f_n(\mathbf{t})=0$ for every $\mathbf{t}\in T^{(n)}$, $f_n(\mathbf{x}_j^{(n)})=\mathrm{sgn}(u_j^{(n)})$ for $j=1,\ldots,s_n$, and $\|f_n\|_\infty=1$. Note that
    \begin{equation*}
        L_n f_n(\mathbf{x}) = \langle f_n, G_n(\mathbf{x}, \cdot)\rangle_{Q_n} = \sum_{j=1}^{s_n} |u_j^{(n)}|\, G_n(\mathbf{x},\mathbf{x}_j^{(n)}).
    \end{equation*}
    Since $\mathrm{dist}(\mathbf{x}_j^{(n)}, \mathbf{n}) \leq \rho_n$ and $\|\mathbf{u}^{(n)}\|_1 = 1$, the triangle inequality implies
    \begin{equation*}
        \|L_nf_n - G_n(\cdot,\mathbf{n})\|_{L^2(\mathbb{S}^2)} \leq \sum_{j=1}^{s_n} |u_j^{(n)}|\|G_n(\cdot,\mathbf{x}_j^{(n)}) - G_n(\cdot, \mathbf{n})\|_{L^2(\mathbb{S}^2)} \leq 1.
    \end{equation*}
    The reproducing property \eqref{eq:reproducing-property} implies $\|G_n(\cdot,\mathbf{n})\|_{L^2(\mathbb{S}^2)} = (n+1)/\sqrt{4\pi}$. Consequently,
    \begin{equation*}
        \|L_n f_n\|_{L^2(\mathbb{S}^2)} \geq \|G_n(\cdot,\mathbf{n})\|_{L^2(\mathbb{S}^2)} - \|L_n f_n - G_n(\cdot,\mathbf{n})\|_{L^2(\mathbb{S}^2)} \geq \frac{n+1}{\sqrt{4\pi}} - 1.
    \end{equation*}
    Hence, $\|L_n\|_{C(\mathbb{S}^2)\to L^2(\mathbb{S}^2)} \geq \|L_n f_n\|_{L^2(\mathbb{S}^2)} / \|f_n\|_\infty = (n+1)/\sqrt{4\pi} - 1 \to \infty$ as $n\to\infty$. By the uniform boundedness principle, there exists $f \in C(\mathbb{S}^2)$ such that $\limsup_{n\to\infty}\|L_n f\|_{L^2(\mathbb{S}^2)} = \infty$, and the proof is complete.
\end{proof}

\subsection{Limitations of P\texorpdfstring{\'o}{ó}lya's Conditions}
The divergence established in \cref{thm:hyper-2-neg} shows that P\'olya's two conditions for quadrature convergence are insufficient for hyperinterpolation in two distinct senses: the stability condition \eqref{eq:polya-cond} does not control polynomials of nonzero degrees, while the accuracy condition \eqref{eq:approximation-property} does not even guarantee convergence of the hyperinterpolants of the constant function.

\begin{proposition} \label{prop:Polya-does-not-imply-L1-L2-MZ}
There exists a sequence $\{Q_n\}_{n\in\mathbb{N}}$ of quadrature formulas satisfying the uniform P\'olya stability condition \eqref{eq:polya-cond}, whereas there exists no constant $C>0$ independent of $n$ such that every $Q_n$ satisfies the $L^1$--$L^2$ MZ condition of degree $n$ with constant $C$.
\end{proposition}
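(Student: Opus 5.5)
The plan is to reuse the sequence $\{Q_n\}_{n\in\mathbb{N}}$ constructed in the proof of \cref{thm:hyper-2-neg}. There $\|\mathbf{w}^{(n)}\|_1 = 4\pi+1$ for every $n$, so the uniform P\'olya stability condition \eqref{eq:polya-cond} holds with $M=4\pi+1$. It then remains to show that the optimal $L^1$--$L^2$ MZ constant of $Q_n$ at degree $n$ is unbounded in $n$. The idea is to test \eqref{eq:L1-L2-MZ} against the reproducing kernel $p_n := G_n(\cdot,\mathbf{n}) \in \mathbb{P}_n(\mathbb{S}^2)$ at the north pole, near which the null quadrature nodes $S^{(n)}$ cluster.

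For the right-hand side, the reproducing property \eqref{eq:reproducing-property} together with \cref{lem:Gn-properties} gives
\begin{equation*}
\|p_n\|_{L^2(\mathbb{S}^2)}^2 = G_n(\mathbf{n},\mathbf{n}) = \frac{(n+1)^2}{4\pi},
\qquad\text{so}\qquad
\|p_n\|_{L^2(\mathbb{S}^2)} = \frac{n+1}{\sqrt{4\pi}}.
\end{equation*}

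For the left-hand side, I keep only the contribution of $S^{(n)}$:
\begin{equation*}
\sum_{j}|w_j^{(n)}||p_n(\mathbf{x}_j)| \geq \sum_{j=1}^{s_n}|u_j^{(n)}|\,|G_n(\mathbf{x}_j^{(n)},\mathbf{n})|.
\end{equation*}
Each node satisfies $\mathrm{dist}(\mathbf{x}_j^{(n)},\mathbf{n})\le\rho_n$, so the identity from the proof of \cref{thm:hyper-2-neg} gives
\begin{equation*}
2G_n(\mathbf{n},\mathbf{n}) - 2G_n(\mathbf{x}_j^{(n)},\mathbf{n}) = \|G_n(\cdot,\mathbf{x}_j^{(n)})-G_n(\cdot,\mathbf{n})\|_{L^2(\mathbb{S}^2)}^2 \le 1.
\end{equation*}
Hence $G_n(\mathbf{x}_j^{(n)},\mathbf{n}) \geq (n+1)^2/(4\pi) - 1/2$, which is positive for $n\ge1$. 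Using $\|\mathbf{u}^{(n)}\|_1=1$, the left-hand side is therefore at least $(n+1)^2/(4\pi)-1/2$.

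Combining the two estimates, any admissible constant $C_n$ for $Q_n$ at degree $n$ must satisfy
\begin{equation*}
C_n \geq \frac{(n+1)^2/(4\pi) - 1/2}{(n+1)/\sqrt{4\pi}} = \frac{n+1}{\sqrt{4\pi}} - \frac{\sqrt{4\pi}}{2(n+1)} \longrightarrow \infty .
\end{equation*}
So no constant independent of $n$ works, which proves the proposition.

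An alternative route is to combine \cref{thm:hyper-2-neg} with the equivalence between \eqref{eq:L1-L2-MZ} and boundedness of $\|L_n\|_{C(\mathbb{S}^2)\to L^2(\mathbb{S}^2)}$. However, that equivalence (\cref{thm:L1-L2-MZ-equivalent-to-hyperinterpolation-stability}) appears later in the paper, so the direct argument above is preferable. The only delicate step is the pointwise lower bound on $G_n(\mathbf{x}_j^{(n)},\mathbf{n})$. It relies on the choice $\delta_n\le\rho_n$ already built into the construction, so no new choices are needed.
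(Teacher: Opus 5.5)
Your argument is correct, but it uses a different sequence from the paper. The paper takes a spherical $n$-design containing $\mathbf{n}$ and puts all the weight on that one node ($w_1^{(n)}=1$, all other weights zero). Then $\|\mathbf{w}^{(n)}\|_1=1$, and the normalized kernel $\phi_n=G_n(\cdot,\mathbf{n})/\sqrt{G_n(\mathbf{n},\mathbf{n})}$ gives the ratio $\sqrt{G_n(\mathbf{n},\mathbf{n})}=(n+1)/\sqrt{4\pi}$ exactly, with no estimate needed.

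You instead reuse the design-plus-null-rule sequence of \cref{thm:hyper-2-neg}. This forces you to control $G_n(\mathbf{x}_j^{(n)},\mathbf{n})$ on the clustered nodes through the $\rho_n$-continuity bound. You do this correctly, and your identity $\|G_n(\cdot,\mathbf{x})-G_n(\cdot,\mathbf{n})\|_{L^2(\mathbb{S}^2)}^2=2G_n(\mathbf{n},\mathbf{n})-2G_n(\mathbf{x},\mathbf{n})$ has the right sign; the display in that proof has it reversed.

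Your route buys a stronger separation. Your $Q_n$ are exact of degree $n$, so they satisfy both of P\'olya's conditions and still violate the uniform $L^1$--$L^2$ MZ condition. In effect this is a direct proof of what \cref{thm:hyper-2-neg} combined with \cref{thm:L1-L2-MZ-equivalent-to-hyperinterpolation-stability} yields. The paper's Dirac-like sequence fails the approximation property ($Q_n(e)=1\neq 4\pi$). That does not matter here, since the proposition concerns the stability condition alone, and its simplicity makes it convenient as the test sequence $Q_n^2$ in \Cref{sec:validation}.

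One small slip: at $n=1$, $(n+1)^2/(4\pi)-1/2=1/\pi-1/2<0$, so the bound is positive only for $n\ge 2$. This is harmless, because $|G_n(\mathbf{x}_j^{(n)},\mathbf{n})|\ge G_n(\mathbf{x}_j^{(n)},\mathbf{n})$ keeps your lower bound valid regardless of sign, and only large $n$ matters.
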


\begin{proof}
Let $\{X^{(n)}\}_{n\in\mathbb{N}}$ be a sequence of spherical $n$-designs consisting of $N_n$ points. Applying rotations if necessary, we assume that each $X^{(n)}$ contains the north pole $\mathbf{n}=(0,0,1)^\top$ as its first node, i.e., $\mathbf{x}_1^{(n)}=\mathbf{n}$ for each $n\in\mathbb{N}$. We define $Q_n$ on the nodes $X^{(n)}$ by assigning the Dirac-like weights
\begin{equation*}
w_j^{(n)} = \left\{\begin{array}{ll}
1, & \text{if } j = 1, \\ 
0, & \text{if } j \neq 1. 
\end{array}\right.
\end{equation*}
By construction,
\begin{equation*}
\|\mathbf{w}^{(n)}\|_1 = \sum_{j=1}^{N_n} |w_j^{(n)}| = 1 \quad \forall \, n \in \mathbb{N},
\end{equation*}
so the sequence $\{Q_n\}_{n \in \mathbb{N}}$ satisfies the uniform P\'olya stability condition \eqref{eq:polya-cond} with constant $M=1$. 

Define the auxiliary polynomial $\phi_n$ by
\begin{equation} \label{eq:auxiliary-kernel}
\phi_n(\mathbf{x}) := \frac{G_n(\mathbf{x}, \mathbf{n})}{\sqrt{G_n(\mathbf{n}, \mathbf{n})}} \in \mathbb{P}_n(\mathbb{S}^2) .
\end{equation}
Using the reproducing property \eqref{eq:reproducing-property}, we have $\|\phi_n\|_{L^2(\mathbb{S}^2)} = 1$. Evaluating the discrete $L^1$-norm of $\phi_n$ associated with the quadrature formula $Q_n$ yields
\begin{equation*}
\sum_{j=1}^{N_n} |w_j^{(n)}| |\phi_n(\mathbf{x}_j^{(n)})| = |w_1^{(n)}|\phi_n(\mathbf{n}) = \sqrt{G_n(\mathbf{n}, \mathbf{n})} = \frac{n+1}{\sqrt{4\pi}}.
\end{equation*}
Now, observe that
\begin{equation*}
\frac{\sum_{j=1}^{N _n} |w_j^{(n)}| |\phi_n(\mathbf{x}_j^{(n)})|}{\|\phi_n\|_{L^2(\mathbb{S}^2)}} = \frac{n+1}{\sqrt{4\pi}} \to \infty \quad \text{as } n \to \infty.
\end{equation*}
Consequently, no constant $C$ exists such that every $Q_n$ satisfies the $L^1$--$L^2$ MZ condition of degree $n$ with constant $C$.
\end{proof}

\begin{proposition} \label{prop:AP-does-not-imply-FAP}
There exists a sequence $\{Q_n\}_{n\in\mathbb{N}}$ of quadrature formulas satisfying the asymptotic approximation property for polynomials \eqref{eq:approximation-property}, but the associated degreen-$n$ hyperinterpolation operators $L_n := L_{n, Q_n}$ satisfy $\|L_n e - e\|_{L^2(\mathbb{S}^2)} \equiv 1$.
\end{proposition}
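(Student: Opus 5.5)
The plan is to exploit the fact that, by \cref{lem:hyperinterpolation-lemma} together with the expansion \eqref{eq:hyperinterpolation}, the hyperinterpolant of $e$ is
\begin{equation*}
    L_n e=\sum_{\ell=0}^{n}\sum_{k=-\ell}^{\ell} Q_n(Y_{\ell,k})\,Y_{\ell,k},
    \qquad
    e=\sum_{\ell=0}^{n}\sum_{k=-\ell}^{\ell} I(Y_{\ell,k})\,Y_{\ell,k}.
\end{equation*}
By orthonormality, this gives
\begin{equation*}
    \|L_ne-e\|_{L^2(\mathbb{S}^2)}^2=\sum_{\ell\le n,\,|k|\le\ell}\bigl(Q_n(Y_{\ell,k})-I(Y_{\ell,k})\bigr)^2 .
\end{equation*}
So I would build $Q_n$ that is exact on $\mathbb{P}_{n-1}(\mathbb{S}^2)$ and on every degree-$n$ harmonic except one, where it errs by exactly $1$. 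The error appears only at the top degree, which drifts to infinity with $n$. This is why the asymptotic approximation property \eqref{eq:approximation-property} survives while $L_n e$ does not converge.

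Concretely, for each $n\ge1$ I take a spherical $n$-design $T^{(n)}$ with $t_n$ points, as in the proof of \cref{thm:hyper-2-neg}. I then choose $(n+1)^2$ distinct points $S^{(n)}=\{\mathbf{y}_1,\dots,\mathbf{y}_{(n+1)^2}\}$ disjoint from $T^{(n)}$ that are unisolvent for $\mathbb{P}_n(\mathbb{S}^2)$, so that the square matrix $\mathbf{Y}_n$ of \eqref{eq:basis-matrix} built on $S^{(n)}$ is invertible. Such points exist because the point evaluations span the dual of the finite-dimensional space $\mathbb{P}_n(\mathbb{S}^2)$. One can pick them greedily, and invertibility is an open condition, so small perturbations keep them off the finite set $T^{(n)}$.

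I then solve $\mathbf{Y}_n\mathbf{v}^{(n)}=\mathbf{b}$, where $\mathbf{b}$ is the coordinate vector with a single $1$ in the $(\ell,k)=(n,0)$ entry. The functional $R_n(f)=\sum_j v_j^{(n)} f(\mathbf{y}_j)$ then satisfies $R_n(Y_{\ell,k})=\delta_{\ell n}\delta_{k0}$ for all $\ell\le n$. Set
\begin{equation*}
    Q_n(f)=\frac{4\pi}{t_n}\sum_{\mathbf{t}\in T^{(n)}}f(\mathbf{t})+R_n(f).
\end{equation*}
Then $Q_n(Y_{\ell,k})=I(Y_{\ell,k})+\delta_{\ell n}\delta_{k0}$ for $\ell\le n$. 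Hence $L_ne=e+Y_{n,0}$ and $\|L_ne-e\|_{L^2(\mathbb{S}^2)}=\|Y_{n,0}\|_{L^2(\mathbb{S}^2)}=1$ for every $n$.

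For \eqref{eq:approximation-property}, I fix $p\in\mathbb{P}_m(\mathbb{S}^2)$. For all $n>m$, the polynomial $p$ lies in $\mathbb{P}_{n-1}(\mathbb{S}^2)$. On this space the design part is exact and $R_n$ vanishes, since $R_n$ annihilates every $Y_{\ell,k}$ with $\ell\le n-1$. Therefore $Q_n(p)=I(p)$ eventually, and the limit is zero.

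The only step needing care is the existence of the unisolvent node set $S^{(n)}$ disjoint from $T^{(n)}$. I would argue it by the dual-space spanning fact plus perturbation, as above. No control of $\|\mathbf{v}^{(n)}\|_1$ is needed, since the statement does not require P\'olya stability. Everything else is the orthonormal-expansion bookkeeping above.
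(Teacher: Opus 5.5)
Your proof is correct, and its core mechanism is the same as the paper's. You add to a rule exact on $\mathbb{P}_n(\mathbb{S}^2)$ a correction functional that agrees with $p\mapsto\langle p,Y_{n,0}\rangle_{L^2(\mathbb{S}^2)}$ on $\mathbb{P}_n(\mathbb{S}^2)$. Then, on $\mathbb{P}_n(\mathbb{S}^2)$, the only moment error sits at $(\ell,k)=(n,0)$, and its degree escapes to infinity with $n$.

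Where you differ is in how the correction is realized. The paper takes a spherical $2n$-design and perturbs the equal weights multiplicatively, $w_j^{(n)}=\frac{4\pi}{N_n}\bigl(1+Y_{n,0}(\mathbf{x}_j^{(n)})\bigr)$. The correction term $\frac{4\pi}{N_n}\sum_j Y_{n,0}(\mathbf{x}_j^{(n)})p(\mathbf{x}_j^{(n)})$ then equals $I(Y_{n,0}p)$ on $\mathbb{P}_n(\mathbb{S}^2)$ by $2n$-exactness. So the paper's construction is fully explicit, lives on the design nodes alone, and needs no unisolvence or perturbation argument.

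It also yields more than the statement asks. Since $(1+Y_{n,0})^2$ and $p^2$ lie in $\mathbb{P}_{2n}(\mathbb{S}^2)$, Cauchy--Schwarz gives $\sum_j|w_j^{(n)}||p(\mathbf{x}_j^{(n)})|\le\sqrt{4\pi+1}\,\|p\|_{L^2(\mathbb{S}^2)}$ for all $p\in\mathbb{P}_n(\mathbb{S}^2)$. Hence the paper's sequence satisfies the uniform $L^1$--$L^2$ MZ condition, and the failure of $L_ne\to e$ is due to accuracy alone. Your interpolatory correction carries no control on $\|\mathbf{v}^{(n)}\|_1$, as you note. It proves the proposition as stated, but it does not separate the accuracy failure from a possible stability failure.

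In exchange, your route is more self-contained and flexible. It needs only linear algebra and a unisolvent set; you could even drop the design entirely and solve $\mathbf{Y}_n\mathbf{v}^{(n)}=\mathbf{c}$ with $c_{(\ell,k)}=I(Y_{\ell,k})+\delta_{\ell n}\delta_{k0}$. Your Parseval identity $\|L_ne-e\|_{L^2(\mathbb{S}^2)}^2=\sum_{\ell\le n,|k|\le\ell}\bigl(Q_n(Y_{\ell,k})-I(Y_{\ell,k})\bigr)^2$ also shows exactly which moment errors are compatible with the asymptotic approximation property.
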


\begin{proof}
Let $\{X^{(n)}\}_{n\in\mathbb{N}}$ be a sequence of spherical $2n$-designs consisting of $N_n$ points, and $v_j^{(n)} := 4\pi/N_n$. We construct each quadrature formula $Q_n$ using nodes $X^{(n)}$ and perturbed weights $w_j^{(n)} = v_j^{(n)} (1 + Y_{n,0}(\mathbf{x}_j^{(n)}))$. Each $Q_n$ is exact of degree $n-1$. Indeed, for any $p\in\mathbb{P}_{n-1}(\mathbb{S}^2)$, we have
\begin{equation*}
Q_n(p) = \sum_{j=1}^{N_n} v_j^{(n)} p(\mathbf{x}_j^{(n)}) + \sum_{j=1}^{N_n} v_j^{(n)} Y_{n,0}(\mathbf{x}_j^{(n)}) p(\mathbf{x}_j^{(n)}).
\end{equation*}
Since $p, Y_{n,0} p \in \mathbb{P}_{2n}(\mathbb{S}^2)$ and $X^{(n)}$ is a spherical $2n$-design, both sums are exact. Thus, we have $Q_n(p) = I(p) + I(Y_{n,0} p)$. Orthonormality of spherical harmonics implies $I(Y_{n,0} p) = 0$. Therefore, $Q_n(p) = I(p)$ for all $p \in \mathbb{P}_{n-1}(\mathbb{S}^2)$, which immediately implies the asymptotic approximation property for polynomials \eqref{eq:approximation-property}.

By definition, we have
\begin{equation*}
L_n e(\mathbf{x}) = \sum_{\ell=0}^n \sum_{k=-\ell}^\ell \langle e, Y_{\ell,k}\rangle_{Q_n} Y_{\ell,k}(\mathbf{x}).
\end{equation*}
For any $(\ell, k)$ with $0 \le \ell \le n$ and $|k| \leq \ell$, observe that $\left(1 + Y_{n,0}\right) Y_{\ell,k} \in \mathbb{P}_{2n}(\mathbb{S}^2)$. Since $X^{(n)}$ is a spherical $2n$-design, this means
\begin{align*}
\langle e, Y_{\ell,k}\rangle_{Q_n} & = \sum_{j=1}^{N_n} v_j^{(n)}(1+Y_{n,0}(\mathbf{x}_j^{(n)}))Y_{\ell,k}(\mathbf{x}_j^{(n)}) \\
& = I(Y_{\ell,k}) + I(Y_{n,0} Y_{\ell,k}) = \sqrt{4\pi}\langle Y_{0,0}, Y_{\ell,k}\rangle_{L^2(\mathbb{S}^2)} + \langle Y_{n,0}, Y_{\ell,k}\rangle_{L^2(\mathbb{S}^2)}.
\end{align*}
Using the orthonormality of spherical harmonics, we obtain
\begin{equation*}
\langle e, Y_{\ell,k}\rangle_{Q_n} = \begin{cases}
\sqrt{4\pi}, & \text{if } \ell = 0, k = 0, \\
1, & \text{if } \ell = n, k = 0, \\
0, & \text{otherwise}.
\end{cases}
\end{equation*}
Reconstructing the hyperinterpolant gives
$L_n e(\mathbf{x}) = \sqrt{4\pi}Y_{0,0}(\mathbf{x}) + Y_{n,0}(\mathbf{x}) = e(\mathbf{x}) + Y_{n,0}(\mathbf{x})$. Therefore
$\|L_n e - e\|_{L^2(\mathbb{S}^2)} = \|Y_{n,0}\|_{L^2(\mathbb{S}^2)} = 1$ for all $n \in \mathbb{N}$, which does not converge to $0$ as $n \to \infty$.
\end{proof}

\section{\texorpdfstring{$L^2$}{L2} Convergence of Hyperinterpolation}
\label{sec:equivalence}
Motivated by P\'olya's theorem in \cite{polya1933konvergenz}, we seek a stability--accuracy characterization of the $L^2$ convergence of hyperinterpolation. The stability condition is provided by the $L^1$--$L^2$ MZ condition \eqref{eq:L1-L2-MZ}, while the corresponding accuracy condition is a strengthening of \eqref{eq:approximation-property} as below.

\subsection{Accuracy Condition for Hyperinterpolation}
\begin{proposition}\label{prop:fap-implies-ap}
    Let $\{Q_n\}_{n\in\mathbb{N}}$ be a sequence of quadrature formulas and $L_n:=L_{n,Q_n}$ denote the associated degree-$n$ hyperinterpolation operator. If $\{Q_n\}_{n \in \mathbb{N}}$ satisfies the asymptotic functional approximation property for polynomials
    \begin{equation} \label{eq:functional-approximation-property}
        \lim_{n \to \infty} \|L_np - p\|_{L^2(\mathbb{S}^2)} = 0 \quad \forall\, p \in \mathbb{P}(\mathbb{S}^2),
    \end{equation}
    then $\{Q_n\}_{n \in \mathbb{N}}$ also satisfies the asymptotic approximation property for polynomials \eqref{eq:approximation-property}.
\end{proposition}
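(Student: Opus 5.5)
The plan is to express the quadrature error $Q_n(p)-I(p)$ as an $L^2$ inner product involving $L_n p - p$, and then apply Cauchy--Schwarz. The tool is \cref{lem:hyperinterpolation-lemma}, which holds for arbitrary signed weights and requires no exactness.

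Fix $p\in\mathbb{P}(\mathbb{S}^2)$ and choose $m$ with $p\in\mathbb{P}_m(\mathbb{S}^2)$. For every $n\geq m$, the constant polynomial $e$ belongs to $\mathbb{P}_n(\mathbb{S}^2)$, since $n\geq 0$. We apply \cref{lem:hyperinterpolation-lemma} with $f=p$ and test polynomial $e$:
\begin{equation*}
    \langle L_n p, e\rangle_{L^2(\mathbb{S}^2)} = \langle p, e\rangle_{Q_n} = Q_n(p).
\end{equation*}
We also have $I(p)=\langle p,e\rangle_{L^2(\mathbb{S}^2)}$. Subtracting the two identities gives
\begin{equation*}
    Q_n(p)-I(p) = \langle L_n p - p, e\rangle_{L^2(\mathbb{S}^2)}.
\end{equation*}
The Cauchy--Schwarz inequality together with $\|e\|_{L^2(\mathbb{S}^2)}=\sqrt{4\pi}$ then yields
\begin{equation*}
    |Q_n(p)-I(p)| \leq \sqrt{4\pi}\,\|L_n p - p\|_{L^2(\mathbb{S}^2)}.
\end{equation*}
By \eqref{eq:functional-approximation-property}, the right-hand side tends to $0$ as $n\to\infty$, which gives \eqref{eq:approximation-property}.

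The argument has no genuine obstacle. The only point needing care is that \cref{lem:hyperinterpolation-lemma} must be applied with a continuous function $f=p$ and a test polynomial $e\in\mathbb{P}_n(\mathbb{S}^2)$; this holds for every $n$, so the restriction $n\geq m$ is not even needed. One should also note explicitly that the lemma uses neither positivity nor exactness of the weights, so the argument covers the fully general signed setting.
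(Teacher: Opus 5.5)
Your proposal is correct and takes the same route as the paper: apply \cref{lem:hyperinterpolation-lemma} with test polynomial $e$ to get $Q_n(p)-I(p)=\langle L_np-p,e\rangle_{L^2(\mathbb{S}^2)}$, then use Cauchy--Schwarz with $\|e\|_{L^2(\mathbb{S}^2)}=\sqrt{4\pi}$. As you observe, the restriction $n\geq m$ is unnecessary, since $e\in\mathbb{P}_n(\mathbb{S}^2)$ for every $n$.
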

\begin{proof}
    Let $p \in \mathbb{P}(\mathbb{S}^2)$ be an arbitrary spherical polynomial. Note that $Q_n(p) = \langle p, e \rangle_{Q_n}$ for any $n \in \mathbb{N}$ and $I(p) = \langle p, e \rangle_{L^2(\mathbb{S}^2)}$. As $e \in \mathbb{P}_n(\mathbb{S}^2)$ for any $n \in \mathbb{N}$, \cref{lem:hyperinterpolation-lemma} applies and we have $\langle L_np, e\rangle_{L^2(\mathbb{S}^2)} = \langle p, e \rangle_{Q_n}$. Now, using this identity and the Cauchy--Schwarz inequality, we have
    \begin{align*}
        |Q_n(p) - I(p)| & = |\langle p, e \rangle_{Q_n} - \langle p,  e\rangle_{L^2(\mathbb{S}^2)}| = |\langle L_np, e \rangle_{L^2(\mathbb{S}^2)} - \langle p,  e\rangle_{L^2(\mathbb{S}^2)}| \notag \\
        & \leq \|L_np - p\|_{L^2(\mathbb{S}^2)}\|e\|_{L^2(\mathbb{S}^2)} = \sqrt{4\pi} \|L_np - p\|_{L^2(\mathbb{S}^2)}.
    \end{align*}
    It follows from \eqref{eq:functional-approximation-property} that
    \begin{equation*}
        \lim_{n \to \infty} |Q_n(p) - I(p)| \leq \lim_{n \to \infty} \sqrt{4\pi} \|L_np - p\|_{L^2(\mathbb{S}^2)} = 0,
    \end{equation*}
    which implies that $\{Q_n\}_{n \in \mathbb{N}}$ satisfies the asymptotic approximation property for polynomials \eqref{eq:approximation-property}.
\end{proof}

\begin{remark} \label{rem:satisfaction}
    The asymptotic functional approximation property for polynomials \eqref{eq:functional-approximation-property} can be ensured in several ways. A classical sufficient condition is that each quadrature formula $Q_n$ be exact of degree $2n$. In this case, $L_np = p$ for all $p \in \mathbb{P}_n(\mathbb{S}^2)$, and hence \eqref{eq:functional-approximation-property} follows immediately. The same property may also be obtained from $L^2$ MZ inequalities. For example, suppose that each $Q_n$ satisfies the $L^2$ MZ inequality of degree $n$ with constant $\eta_n > 0$ \cite{mhaskar2001spherical},
    \begin{equation*}
        |Q_n(p^2) - I(p^2)| \leq \eta_n I(p^2) \quad \forall\, p \in \mathbb{P}_n(\mathbb{S}^2).
    \end{equation*}
    By \cite[Proposition~5.4]{an2026optimizationapproachweightcollocation}, this condition implies
    \begin{equation*}
        \|L_np - p\|_{L^2(\mathbb{S}^2)} \leq \eta_n \|p\|_{L^2(\mathbb{S}^2)} \quad \forall\, p \in \mathbb{P}_n(\mathbb{S}^2).
    \end{equation*}
    Consequently, if $\eta_n \to 0$ as $n \to \infty$, then $\{Q_n\}_{n \in \mathbb{N}}$ also satisfies \eqref{eq:functional-approximation-property}.
\end{remark}

\subsection{Stability Condition for Hyperinterpolation}
We first establish an exact equivalence between the $L^1$--$L^2$ MZ bound and the operator-norm bound for hyperinterpolation. As a consequence, the optimal $L^1$--$L^2$ MZ constant is identified with the operator norm of the associated hyperinterpolation operator.

\begin{theorem}
\label{thm:L1-L2-MZ-equivalent-to-hyperinterpolation-stability}
Let $Q$ be a quadrature formula and $L_n:=L_{n,Q}$ denote the associated degree-$n$ hyperinterpolation operator. Then, for any fixed constant $C > 0$, $Q$ satisfies the $L^1$--$L^2$ MZ condition of degree $n$ with constant $C$ if and only if
\begin{equation*}
    \|L_n\|_{C(\mathbb{S}^2)\to L^2(\mathbb{S}^2)}\leq C.
\end{equation*}
\end{theorem}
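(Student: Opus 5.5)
The plan is to prove both directions via a duality identity that expresses the operator norm as a supremum over the unit ball of $\mathbb{P}_n(\mathbb{S}^2)$. Because $L_n f\in\mathbb{P}_n(\mathbb{S}^2)$ and the Riesz representation holds in the finite-dimensional Hilbert space $\mathbb{P}_n(\mathbb{S}^2)$, we have
\begin{equation*}
\|L_nf\|_{L^2(\mathbb{S}^2)}=\sup_{\substack{p\in\mathbb{P}_n(\mathbb{S}^2)\\ \|p\|_{L^2(\mathbb{S}^2)}\le 1}}\langle L_nf,p\rangle_{L^2(\mathbb{S}^2)}=\sup_{\|p\|_{L^2(\mathbb{S}^2)}\le1}\langle f,p\rangle_{Q},
\end{equation*}
where the second equality is \cref{lem:hyperinterpolation-lemma}. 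Taking the supremum over $\|f\|_\infty\le1$ and exchanging the two suprema gives
\begin{equation*}
\|L_n\|_{C(\mathbb{S}^2)\to L^2(\mathbb{S}^2)}=\sup_{\|p\|_{L^2(\mathbb{S}^2)}\le1}\ \sup_{\|f\|_\infty\le1}\sum_{j=1}^N w_jp(\mathbf{x}_j)f(\mathbf{x}_j).
\end{equation*}
The main task is to show that the inner supremum equals $\sum_j|w_j||p(\mathbf{x}_j)|$. The resulting identity $\|L_n\|=\sup_{\|p\|_{L^2}\le1}\sum_j|w_j||p(\mathbf{x}_j)|$ yields both implications at once.

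For the upper bound on the inner supremum, note that $|\sum_j w_jp(\mathbf{x}_j)f(\mathbf{x}_j)|\le\sum_j|w_j||p(\mathbf{x}_j)|$ whenever $\|f\|_\infty\le1$. This already proves the direction from the $L^1$--$L^2$ MZ condition to the operator-norm bound. Concretely, for $\|f\|_\infty\le 1$ set $p=L_nf$; then $\|L_nf\|_{L^2}^2=\langle f,L_nf\rangle_Q\le\sum_j|w_j||L_nf(\mathbf{x}_j)|\le C\|L_nf\|_{L^2}$, and dividing by $\|L_nf\|_{L^2}$ (the case $L_nf=0$ is trivial) gives $\|L_nf\|_{L^2}\le C$.

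For the converse, fix $p\in\mathbb{P}_n(\mathbb{S}^2)$. I would build $f\in C(\mathbb{S}^2)$ with $\|f\|_\infty\le1$ and $f(\mathbf{x}_j)=\mathrm{sgn}(w_jp(\mathbf{x}_j))$, using disjointly supported hat functions $\psi_j$ as in the proof of \cref{thm:hyper-2-neg}, with radius a third of the minimal node separation. Then
\begin{equation*}
\sum_j|w_j||p(\mathbf{x}_j)|=\langle f,p\rangle_Q=\langle L_nf,p\rangle_{L^2(\mathbb{S}^2)}\le\|L_nf\|_{L^2(\mathbb{S}^2)}\|p\|_{L^2(\mathbb{S}^2)}\le C\|p\|_{L^2(\mathbb{S}^2)}
\end{equation*}
by \cref{lem:hyperinterpolation-lemma} and Cauchy--Schwarz, which is exactly \eqref{eq:L1-L2-MZ}.

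The one step that needs care is this construction of $f$, which prescribes signs at finitely many distinct nodes while keeping $\|f\|_\infty\le 1$. Because the nodes are distinct, the disjoint-support hat functions settle it directly, and using the convention $\mathrm{sgn}(0)=0$ covers zero weights or zero values of $p$. Everything else is algebraic. I would finish by noting that the argument shows the optimal constant in \eqref{eq:L1-L2-MZ} equals $\|L_n\|_{C(\mathbb{S}^2)\to L^2(\mathbb{S}^2)}$.
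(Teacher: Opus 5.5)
Your proposal is correct and follows the paper's proof. The forward direction tests with $p=L_nf$ via \cref{lem:hyperinterpolation-lemma}, the converse uses the same disjointly supported hat functions of radius $q/3$ carrying the signs $\mathrm{sgn}(w_jp(\mathbf{x}_j))$, and your sup-exchange framing is the identity of \cref{cor:sharp-MZ-constant} and \Cref{sec:duality}. The only detail to add is that when $N=1$ the minimal separation is undefined, so, as the paper does, you simply fix any positive radius such as $q=1$.
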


\begin{proof}
Suppose first that $Q$ satisfies the $L^1$--$L^2$ MZ condition of degree $n$ with constant $C$. For any $f\in C(\mathbb{S}^2)$, we have $L_nf\in\mathbb{P}_n(\mathbb{S}^2)$. Applying \cref{lem:hyperinterpolation-lemma} with $p=L_nf$, followed by the $L^1$--$L^2$ MZ condition, gives
\begin{equation*}
    \|L_nf\|_{L^2(\mathbb{S}^2)}^2 = \langle L_nf,L_nf\rangle_{L^2(\mathbb{S}^2)} = \langle f,L_nf\rangle_Q \leq \|f\|_\infty\sum_{j=1}^{N}|w_j||L_nf(\mathbf{x}_j)| \leq C\|f\|_\infty\|L_nf\|_{L^2(\mathbb{S}^2)}.
\end{equation*}
Consequently, $\|L_nf\|_{L^2(\mathbb{S}^2)}\leq C\|f\|_\infty$, and hence $\|L_n\|_{C(\mathbb{S}^2)\to L^2(\mathbb{S}^2)}\leq C$.

Conversely, suppose that $\|L_n\|_{C(\mathbb{S}^2)\to L^2(\mathbb{S}^2)}\leq C$. Fix $p\in\mathbb{P}_n(\mathbb{S}^2)$. Set
\begin{equation*}
    q:=
    \begin{cases}
    \min_{\substack{ \mathbf{x}_i, \mathbf{x}_j \in X \\ \mathbf{x}_i \neq \mathbf{x}_j}}
    \mathrm{dist}(\mathbf{x}_i,\mathbf{x}_j),&N\geq2,\\[1.2ex]
    1,&N=1.
    \end{cases}
\end{equation*}
For $j=1,\ldots,N$, let us define the auxiliary functions $\psi_j(\mathbf{x}):=\max\{1-3\mathrm{dist}(\mathbf{x},\mathbf{x}_j)/q,0\}$. By construction, the supports of $\psi_j$ are pairwise disjoint. Consider now $f_p \in C(\mathbb{S}^2)$ given by
\begin{equation*}
f_p(\mathbf{x}):=\sum_{j=1}^{N}\mathrm{sgn}(w_jp(\mathbf{x}_j))\psi_j(\mathbf{x}),
\end{equation*}
with the convention that $\mathrm{sgn}(0)=0$. Then, $\|f_p\|_\infty = 1$ and $f_p(\mathbf{x}_j) = \mathrm{sgn}(w_jp(\mathbf{x}_j))$ for $j = 1, \ldots, N$. Therefore, using \cref{lem:hyperinterpolation-lemma}, the Cauchy--Schwarz inequality, and the assumed operator-norm bound, we obtain
\begin{align*}
    \sum_{j=1}^{N}|w_j||p(\mathbf{x}_j)| & = \langle f_p,p\rangle_Q = \langle L_nf_p,p\rangle_{L^2(\mathbb{S}^2)}  \leq \|L_nf_p\|_{L^2(\mathbb{S}^2)}\|p\|_{L^2(\mathbb{S}^2)} \leq C\|f_p\|_\infty\|p\|_{L^2(\mathbb{S}^2)} = C\|p\|_{L^2(\mathbb{S}^2)}.
\end{align*}
Thus, $Q$ satisfies the $L^1$--$L^2$ MZ condition of degree $n$ with constant $C$.
\end{proof}

An immediate but important consequence of \cref{thm:L1-L2-MZ-equivalent-to-hyperinterpolation-stability} is the following sharp norm identity, which also motivates the duality interpretation developed in the next subsection.

\begin{corollary}\label{cor:sharp-MZ-constant}
The optimal constant in the $L^1$--$L^2$ MZ condition of degree $n$ for a quadrature formula $Q$ is exactly the operator norm of $L_n$. More precisely,
\begin{equation}\label{eq:sharp-MZ-constant}
    \|L_n\|_{C(\mathbb{S}^2)\to L^2(\mathbb{S}^2)}=\sup_{\substack{p\in\mathbb{P}_n(\mathbb{S}^2)\\p\neq0}}
      \frac{\sum_{j=1}^{N}|w_j||p(\mathbf{x}_j)|}{\|p\|_{L^2(\mathbb{S}^2)}}.
\end{equation}
\end{corollary}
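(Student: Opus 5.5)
Denote the right-hand side of \eqref{eq:sharp-MZ-constant} by $C^\ast$. It is the smallest constant $C$ for which $Q$ satisfies the $L^1$--$L^2$ MZ condition \eqref{eq:L1-L2-MZ} of degree $n$. Since $\mathbb{P}_n(\mathbb{S}^2)$ is finite dimensional and the map $p\mapsto\sum_j|w_j||p(\mathbf{x}_j)|$ is a continuous seminorm, $C^\ast$ is finite. The plan is to apply \cref{thm:L1-L2-MZ-equivalent-to-hyperinterpolation-stability} in both directions and obtain the two inequalities $\|L_n\|_{C(\mathbb{S}^2)\to L^2(\mathbb{S}^2)}\le C^\ast$ and $C^\ast\le\|L_n\|_{C(\mathbb{S}^2)\to L^2(\mathbb{S}^2)}$.

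For the first inequality, we must make sure that the supremum itself is an admissible constant. By definition of the supremum, $\sum_j|w_j||p(\mathbf{x}_j)|\le C^\ast\|p\|_{L^2(\mathbb{S}^2)}$ holds for every $p\in\mathbb{P}_n(\mathbb{S}^2)$. When $C^\ast>0$, the ``only if'' direction of \cref{thm:L1-L2-MZ-equivalent-to-hyperinterpolation-stability} with $C=C^\ast$ gives $\|L_n\|\le C^\ast$. The theorem requires $C>0$, so the degenerate case $C^\ast=0$ needs separate handling. In that case we apply the theorem with $C=\varepsilon$ for every $\varepsilon>0$ and let $\varepsilon\to0$. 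Alternatively, we can repeat the first half of its proof verbatim: $\|L_nf\|_{L^2}^2=\langle f,L_nf\rangle_Q\le\|f\|_\infty C^\ast\|L_nf\|_{L^2}$.

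For the reverse inequality, set $C=\|L_n\|_{C(\mathbb{S}^2)\to L^2(\mathbb{S}^2)}$, which is finite because $L_n$ has finite rank and bounded coefficients. If $C>0$, the ``if'' direction of the theorem yields \eqref{eq:L1-L2-MZ} with this $C$, hence $C^\ast\le C$. If $C=0$, we again argue with $\varepsilon>0$, or directly with the test function $f_p$ from that proof, which gives $\sum_j|w_j||p(\mathbf{x}_j)|=\langle L_nf_p,p\rangle_{L^2}=0$. Combining the two inequalities proves \eqref{eq:sharp-MZ-constant}.

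The only point requiring care is the strict positivity hypothesis $C>0$ in \cref{thm:L1-L2-MZ-equivalent-to-hyperinterpolation-stability}. The $\varepsilon$-argument handles it cleanly, so I expect no genuine obstacle: the corollary is a reformulation of the theorem as an equality of infima.
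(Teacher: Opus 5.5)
Your proposal is correct and is essentially the paper's argument: the paper also takes the infimum over admissible constants in \cref{thm:L1-L2-MZ-equivalent-to-hyperinterpolation-stability}. Your version splits this into two inequalities and uses the $\varepsilon$-argument to handle explicitly the degenerate case $C^\ast=0$, which the paper glosses over; testing with $e$ shows this case occurs only when $\mathbf{w}=\mathbf{0}$.
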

\begin{proof}
This follows immediately from \cref{thm:L1-L2-MZ-equivalent-to-hyperinterpolation-stability} by taking the infimum over all admissible constants $C$.
\end{proof}

\subsection{A Banach Space Duality Interpretation} \label{sec:duality}
The norm identity in \cref{cor:sharp-MZ-constant} has a natural interpretation through Banach space duality. Let $Q$ be a quadrature formula and $L_n := L_{n,Q}$ denote the associated degree-$n$ hyperinterpolation operator. Consider
\begin{equation*}
    L_n:C(\mathbb{S}^2)\to\mathbb{P}_n(\mathbb{S}^2),
\end{equation*}
where $C(\mathbb{S}^2)$ is equipped with the uniform norm and $\mathbb{P}_n(\mathbb{S}^2)$ is equipped with the norm inherited from $L^2(\mathbb{S}^2)$. Its Banach adjoint is
\begin{equation*}
    L_n^*: \mathbb{P}_n(\mathbb{S}^2)^* \to C(\mathbb{S}^2)^*.
\end{equation*}
Since $\mathbb{P}_n(\mathbb{S}^2)$ is a finite-dimensional Hilbert space, the Riesz representation theorem identifies $\mathbb{P}_n(\mathbb{S}^2)^*$ isometrically with $\mathbb{P}_n(\mathbb{S}^2)$ through the $L^2$ inner product. Moreover, the Riesz--Markov--Kakutani representation theorem \cite[Theorem~7.17]{folland1999real} identifies $C(\mathbb{S}^2)^*$ isometrically with the Banach space $M(\mathbb{S}^2)$ of finite signed regular Borel measures on $\mathbb{S}^2$, endowed with the total variation norm
\begin{equation*}
    \|\mu\|_{M} := \sup_{\|f\|_\infty \leq 1}
    \left| \int_{\mathbb{S}^2} f(\mathbf{x})\,d\mu(\mathbf{x}) \right|.
\end{equation*}
Under these canonical isometric identifications, suppressing the Riesz isomorphism from the notation, we may regard the adjoint as an operator
\begin{equation*}
    L_n^*:\mathbb{P}_n(\mathbb{S}^2) \to M(\mathbb{S}^2),
\end{equation*}
mapping $p\in\mathbb{P}_n(\mathbb{S}^2)$ to the atomic signed measure
\begin{equation*}
    L_n^*p=\sum_{j=1}^{N}w_jp(\mathbf{x}_j)\delta_{\mathbf{x}_j}.
\end{equation*}
Indeed, for every $f\in C(\mathbb{S}^2)$, \cref{lem:hyperinterpolation-lemma} implies
\begin{equation*}
    (L_n^*p)(f) = \sum_{j=1}^{N}w_jp(\mathbf{x}_j)f(\mathbf{x}_j) =\langle f,p\rangle_Q = \langle L_nf,p\rangle_{L^2(\mathbb{S}^2)}.
\end{equation*}
Because the quadrature nodes are pairwise distinct, we have
\begin{equation*}
    \|L_n^*p\|_{M(\mathbb{S}^2)}
    =\sum_{j=1}^{N}|w_j||p(\mathbf{x}_j)|.
\end{equation*}
It follows that
\begin{equation*}
    \|L_n^*\|_{L^2(\mathbb{S}^2)\to M(\mathbb{S}^2)} = \sup_{\substack{p \in \mathbb{P}_n(\mathbb{S}^2) \\ p \neq 0}} \frac{\sum_{j=1}^N |w_j||p(\mathbf{x}_j)|}{\|p\|_{L^2(\mathbb{S}^2)}}.
\end{equation*}
Because a bounded linear operator and its Banach adjoint have the same operator norm, it holds that
\begin{equation*}
    \|L_n\|_{C(\mathbb{S}^2) \to L^2(\mathbb{S}^2)} = \|L_n^*\|_{L^2(\mathbb{S}^2)\to M(\mathbb{S}^2)},
\end{equation*}
in which \cref{eq:sharp-MZ-constant} is recovered.

This dual formulation shows that the $L^1$--$L^2$ MZ condition \eqref{eq:L1-L2-MZ} is intrinsic to the functional-analytic structure of hyperinterpolation. Its discrete $L^1$ expression is exactly the total variation norm of the atomic signed measure $L_n^*p$, whereas its continuous $L^2$ norm is inherited from the Riesz identification of $\mathbb{P}_n(\mathbb{S}^2)^*$ with $\mathbb{P}_n(\mathbb{S}^2)$. Thus, the mixed $L^1$--$L^2$ structure is intrinsically determined by the Banach adjoint of the hyperinterpolation operator rather than introduced ad hoc.

\subsection{An If-and-Only-If Characterization of the \texorpdfstring{$L^2$}{L2} Convergence of Hyperinterpolation}
The main theorem of this paper establishes the necessary and sufficient conditions for the $L^2$ convergence of hyperinterpolation.

\begin{theorem}\label{thm:convergence}
Let $\{Q_n\}_{n\in\mathbb{N}}$ be a sequence of quadrature formulas and $L_n:=L_{n,Q_n}$ denote the associated degree-$n$ hyperinterpolation operator. Then
\begin{equation*}
    \lim_{n\to\infty}\|L_nf-f\|_{L^2(\mathbb{S}^2)}=0
    \qquad\forall f\in C(\mathbb{S}^2)
\end{equation*}
if and only if the following two conditions hold:
\begin{enumerate}[(1), noitemsep]
    \item there exists a constant $C>0$, independent of $n$, such that every $Q_n$ satisfies the $L^1$--$L^2$ MZ condition \eqref{eq:L1-L2-MZ} of degree $n$ with constant $C$;
    \item $\{Q_n\}_{n \in \mathbb{N}}$ satisfies the asymptotic functional approximation property for polynomials \eqref{eq:functional-approximation-property}.
\end{enumerate}
\end{theorem}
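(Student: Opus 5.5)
The proof is a Banach--Steinhaus argument in the spirit of P\'olya's theorem (\cref{thm:polya}), with \cref{thm:L1-L2-MZ-equivalent-to-hyperinterpolation-stability} turning the stability condition into a uniform operator-norm bound.

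\emph{Necessity.} Assume $\|L_nf-f\|_{L^2(\mathbb{S}^2)}\to0$ for every $f\in C(\mathbb{S}^2)$.

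Condition (2) follows at once by taking $f=p\in\mathbb{P}(\mathbb{S}^2)$.

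For condition (1), fix $f\in C(\mathbb{S}^2)$. The convergence gives
\begin{equation*}
\sup_n\|L_nf\|_{L^2(\mathbb{S}^2)}\le \|f\|_{L^2(\mathbb{S}^2)}+\sup_n\|L_nf-f\|_{L^2(\mathbb{S}^2)}<\infty .
\end{equation*}
Each $L_n$ is a bounded linear operator from the Banach space $C(\mathbb{S}^2)$ into the Banach space $L^2(\mathbb{S}^2)$. The uniform boundedness principle therefore yields a constant $C$ with $\sup_n\|L_n\|_{C(\mathbb{S}^2)\to L^2(\mathbb{S}^2)}\le C$. By the ``if'' direction of \cref{thm:L1-L2-MZ-equivalent-to-hyperinterpolation-stability}, every $Q_n$ then satisfies the $L^1$--$L^2$ MZ condition of degree $n$ with this same $C$.

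\emph{Sufficiency.} Assume (1) and (2). The ``only if'' direction of \cref{thm:L1-L2-MZ-equivalent-to-hyperinterpolation-stability} gives $\|L_n\|_{C(\mathbb{S}^2)\to L^2(\mathbb{S}^2)}\le C$ for all $n$.

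Fix $f\in C(\mathbb{S}^2)$ and $\epsilon>0$. Spherical polynomials are dense in $C(\mathbb{S}^2)$ (Stone--Weierstrass, since restrictions of polynomials in $\mathbb{R}^3$ span $\mathbb{P}(\mathbb{S}^2)$). Choose $p\in\mathbb{P}(\mathbb{S}^2)$ with $\|f-p\|_\infty<\epsilon$.

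Splitting the error and using linearity of $L_n$,
\begin{equation*}
\|L_nf-f\|_{L^2(\mathbb{S}^2)}\le \|L_n(f-p)\|_{L^2(\mathbb{S}^2)}+\|L_np-p\|_{L^2(\mathbb{S}^2)}+\|p-f\|_{L^2(\mathbb{S}^2)} .
\end{equation*}
We bound the three terms separately:
\begin{itemize}
\item the first is at most $C\epsilon$ by the uniform operator-norm bound;
\item the third is at most $\sqrt{4\pi}\,\epsilon$;
\item the middle term tends to $0$ by (2).
\end{itemize}
Hence $\limsup_n\|L_nf-f\|_{L^2(\mathbb{S}^2)}\le (C+\sqrt{4\pi})\epsilon$. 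Since $\epsilon$ is arbitrary, the limit is $0$.

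The only delicate point is in the necessity part. The uniform boundedness principle requires each $L_n$ to be bounded, which holds because $\|L_n\|_{C(\mathbb{S}^2)\to L^2(\mathbb{S}^2)}$ is finite for a finite quadrature. It also requires the uniform constant to be transferred to the MZ condition with exactly the same $C$, which is precisely the content of the sharp equivalence in \cref{thm:L1-L2-MZ-equivalent-to-hyperinterpolation-stability}.
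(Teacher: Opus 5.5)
Your proposal is correct and follows essentially the same route as the paper. Necessity combines the uniform boundedness principle with the operator-norm/$L^1$--$L^2$ MZ equivalence, and sufficiency uses density of polynomials with the same three-term error split, where you take a $\limsup$ in place of the paper's explicit $\epsilon/2$ bookkeeping.
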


\begin{proof}
We first prove necessity. Suppose that $\lim_{n\to\infty}\|L_nf-f\|_{L^2(\mathbb{S}^2)}=0$ for all $f\in C(\mathbb{S}^2)$. \eqref{eq:functional-approximation-property} follows immediately because $\mathbb{P}(\mathbb{S}^2)\subseteq C(\mathbb{S}^2)$. For every fixed $f\in C(\mathbb{S}^2)$,
\begin{equation*}
    \|L_nf\|_{L^2(\mathbb{S}^2)}
    \leq \|L_nf-f\|_{L^2(\mathbb{S}^2)}+\|f\|_{L^2(\mathbb{S}^2)},
\end{equation*}
so the sequence $\{\|L_nf\|_{L^2(\mathbb{S}^2)}\}_{n\in\mathbb{N}}$ is bounded. Since $C(\mathbb{S}^2)$ is a Banach space, the uniform boundedness principle yields the existence of constant $C > 0$ such that
\begin{equation*}
\sup_{n\in\mathbb{N}}\|L_n\|_{C(\mathbb{S}^2)\to L^2(\mathbb{S}^2)} \leq C.
\end{equation*}
By \cref{thm:L1-L2-MZ-equivalent-to-hyperinterpolation-stability}, each $Q_n$ satisfies the $L^1$--$L^2$ MZ condition of degree $n$ with constant $C$. This establishes necessity.

We next prove sufficiency. Fix $f\in C(\mathbb{S}^2)$ and let $\epsilon>0$. Since $\mathbb{P}(\mathbb{S}^2)$ is dense in $C(\mathbb{S}^2)$, choose $p\in\mathbb{P}(\mathbb{S}^2)$ such that $(C+\sqrt{4\pi})\|f-p\|_\infty< \epsilon/2$. For every $n\in\mathbb{N}$, we have
\begin{align*}
    \|L_nf-f\|_{L^2(\mathbb{S}^2)}
    &\leq \|L_n(f-p)\|_{L^2(\mathbb{S}^2)}
      +\|L_np-p\|_{L^2(\mathbb{S}^2)}
      +\|p-f\|_{L^2(\mathbb{S}^2)} \\
    &\leq (C+\sqrt{4\pi})\|f-p\|_\infty
      +\|L_np-p\|_{L^2(\mathbb{S}^2)}.
\end{align*}
By \eqref{eq:functional-approximation-property}, there exists $n_0\in\mathbb{N}$ such that $\|L_np-p\|_{L^2(\mathbb{S}^2)}<\epsilon/2$ for all $n \geq n_0$. Therefore, $\|L_nf-f\|_{L^2(\mathbb{S}^2)}<\epsilon$ for all $n\geq n_0$. Since $\epsilon>0$ is arbitrary, it follows that $\lim_{n\to\infty}\|L_nf-f\|_{L^2(\mathbb{S}^2)}=0$, establishing the sufficiency.
\end{proof}

\section{Strict Logical Hierarchy of Conditions}
\label{sec:hierarchy}
We now provide a comprehensive summary of the logical relationships among various stability and accuracy conditions pertinent to quadrature and hyperinterpolation convergence.

\begin{theorem} \label{thm:hierarchy}
For a sequence of quadrature formulas $\{Q_n\}_{n \in \mathbb{N}}$ and the sequence of their associated hyperinterpolation operators $\{L_n\}_{n \in \mathbb{N}}$ (where $L_n := L_{n, Q_n}$), the stability and accuracy conditions governing quadrature convergence and the $L^2$ convergence of hyperinterpolation form the complete logical hierarchy shown in \cref{fig:hierarchy}.
\end{theorem}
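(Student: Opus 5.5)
The plan is to read the proof off \cref{fig:hierarchy} arrow by arrow. Most arrows are already established earlier in the paper. The only genuinely new content is the two non-implications from the $L^1$ and $L^2$ MZ conditions.

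\textbf{Arrows already proved.}
\begin{itemize}
\item The equivalences (double-headed arrows) are \cref{thm:convergence}, \cref{thm:polya}, \cref{thm:L1-L2-MZ-equivalent-to-hyperinterpolation-stability}, and \cref{rem:quadrature-stability}.
\item The accuracy implication and non-implication are \cref{prop:fap-implies-ap} and \cref{prop:AP-does-not-imply-FAP}.
\item The stability implication and non-implication are \cref{lem:L1-L2-MZ-implies-Polya} and \cref{prop:Polya-does-not-imply-L1-L2-MZ}.
\item The implications from the classical MZ conditions are \cref{lem:L1-MZ-implies-L1-L2-MZ} and \cref{lem:L2-MZ-implies-L1-L2-MZ}, applied with uniform constants.
\item Hyperinterpolation convergence implies quadrature convergence by chaining: \cref{thm:convergence} gives uniform $L^1$--$L^2$ MZ together with \eqref{eq:functional-approximation-property}. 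Then \cref{lem:L1-L2-MZ-implies-Polya} and \cref{prop:fap-implies-ap} give P\'olya's two conditions, and \cref{thm:polya} gives quadrature convergence. Alternatively, one can use $|Q_n(f)-I(f)|\le\sqrt{4\pi}\|L_nf-f\|_{L^2}$ directly, via \cref{lem:hyperinterpolation-lemma} with $p=e$.
\item The converse failure is \cref{thm:hyper-2-neg}.
\end{itemize}

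\textbf{Uniform $L^1$--$L^2$ MZ does not imply uniform $L^1$ MZ.} Take $Q_n$ with a single node $\mathbf{x}_1$ and weight $w$. The $L^1$--$L^2$ ratio is
\[
|w|\,|p(\mathbf{x}_1)|/\|p\|_{L^2}\le |w|\sqrt{G_n(\mathbf{x}_1,\mathbf{x}_1)}=|w|(n+1)/\sqrt{4\pi},
\]
which is unbounded. So I instead spread the weight: I want $\sum|w_j||p(\mathbf{x}_j)|\lesssim\|p\|_{L^2}$ while violating the $L^1$ bound.

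I take the equal-weight spherical $2n$-design $X^{(n)}$ with $N_n$ points and weights $4\pi/N_n$, which satisfies the $L^2$ MZ condition with $c_2=1$ (exactness on $p^2$). By \cref{lem:L2-MZ-implies-L1-L2-MZ} it therefore satisfies $L^1$--$L^2$ MZ uniformly.

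Next I test against a concentrated polynomial $p$ with $\|p\|_{L^1}$ much smaller than the discrete $L^1$ sum. The obvious candidate is $\phi_n$ from \eqref{eq:auxiliary-kernel}, centred at a node. This probably fails, because positive-weight designs with good separation satisfy $L^1$ MZ as well. So I modify the weights instead: put an extra weight $\alpha_n=n^{-1}$ on a single node $\mathbf{x}_1$.
\begin{itemize}
\item \emph{$L^1$--$L^2$ term.} The added contribution is $\alpha_n|p(\mathbf{x}_1)|\le n^{-1}\cdot\frac{n+1}{\sqrt{4\pi}}\|p\|_{L^2}$, which stays bounded. The design part is bounded by $\sqrt{4\pi}\|p\|_{L^2}$.
\item \emph{$L^1$ MZ test.} For $p=G_n(\cdot,\mathbf{x}_1)$ we have $\|p\|_{L^1}=\|G_n(\cdot,\mathbf{x}_1)\|_{L^1}$, which is the Lebesgue constant of the Fourier projection and grows like $\sqrt{n}$. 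Meanwhile $\alpha_n p(\mathbf{x}_1)\approx n$. The ratio is $\gtrsim\sqrt{n}\to\infty$, so no uniform $c_1$ exists.
\end{itemize}

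\textbf{Uniform $L^1$--$L^2$ MZ does not imply uniform $L^2$ MZ.} I use the same sequence with $\alpha_n=n^{-1}$. Testing the $L^2$ MZ ratio on $p=\phi_n$ gives $\alpha_n\phi_n(\mathbf{x}_1)^2=n^{-1}(n+1)^2/(4\pi)\to\infty$, while $\|\phi_n\|_{L^2}^2=1$. So the uniform $L^2$ MZ condition fails.

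\textbf{Main obstacle.} The delicate point is the $L^1$ separation. I need the lower bound $\|G_n(\cdot,\mathbf{x})\|_{L^1(\mathbb{S}^2)}\gtrsim\sqrt{n}$ on $\mathbb{S}^2$, which I would cite from the classical Lebesgue-constant asymptotics. If that citation is unavailable, I would choose $\alpha_n=n^{-1/2}$ instead. Then the $L^1$ test $\alpha_n G_n(\mathbf{x}_1,\mathbf{x}_1)/\|G_n(\cdot,\mathbf{x}_1)\|_{L^1}$ only needs the trivial bound
\[
\|G_n(\cdot,\mathbf{x}_1)\|_{L^1}\le\sqrt{4\pi}\,\|G_n(\cdot,\mathbf{x}_1)\|_{L^2}=n+1,
\]
giving a ratio $\gtrsim n^{-1/2}n^2/n=n^{1/2}$. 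However, the $L^1$--$L^2$ bound then becomes $\alpha_n(n+1)/\sqrt{4\pi}\sim\sqrt{n}$, which is no longer bounded. Balancing these two requirements is exactly where care is needed. My first attempt would be $\alpha_n=n^{-1}$ combined with the $L^1$ lower bound for the reproducing kernel; I would also check whether replacing $G_n$ by a better-localized kernel polynomial, with $L^1$ norm $O(1)$, gives a ratio of order $n$ directly.
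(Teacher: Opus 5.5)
Your arrow-by-arrow accounting matches the paper's proof, including the chaining for hyperinterpolation convergence $\Rightarrow$ quadrature convergence; your direct bound $|Q_n(f)-I(f)|\le\sqrt{4\pi}\|L_nf-f\|_{L^2(\mathbb{S}^2)}$ is the cleaner way to get that arrow. Your separating sequence differs from the paper's, and it works. The $L^1$--$L^2$ estimate and the failure of the uniform $L^2$ MZ condition are complete as written.

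In the $L^1$ part, however, your ``main obstacle'' paragraph asks for the wrong inequality. The ratio $\alpha_n G_n(\mathbf{x}_1,\mathbf{x}_1)/\|G_n(\cdot,\mathbf{x}_1)\|_{L^1(\mathbb{S}^2)}$ diverges only if $\|G_n(\cdot,\mathbf{x}_1)\|_{L^1(\mathbb{S}^2)}=o(n)$. What you need is therefore the \emph{upper} half of the Lebesgue-constant asymptotics, $\|G_n(\cdot,\mathbf{x})\|_{L^1(\mathbb{S}^2)}=O(\sqrt{n})$. A lower bound $\gtrsim\sqrt{n}$ is irrelevant; it could only shrink the ratio. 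With $\alpha_n=n^{-1}$ and only the trivial estimate $\|G_n(\cdot,\mathbf{x}_1)\|_{L^1}\le n+1$, the ratio is just $(n+1)/(4\pi n)$, which stays bounded. Your fallback $\alpha_n=n^{-1/2}$ destroys the $L^1$--$L^2$ bound, as you noticed. So as written, this step rests on citing the classical $O(\sqrt{n})$ bound for the Fourier--Laplace Lebesgue constant on $\mathbb{S}^2$. That bound is true, but you should state the correct direction.

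The self-contained fix is the one you gesture at in your last sentence, and it is exactly the paper's device. Take $\psi_n:=\phi_m^2$ with $m=\lfloor n/2\rfloor$, where $\phi_m$ is as in \eqref{eq:auxiliary-kernel} after rotating so that $\mathbf{x}_1=\mathbf{n}$. Then $\psi_n\in\mathbb{P}_n(\mathbb{S}^2)$, $\psi_n\ge 0$, $\|\psi_n\|_{L^1(\mathbb{S}^2)}=\|\phi_m\|_{L^2(\mathbb{S}^2)}^2=1$, and $\psi_n(\mathbf{x}_1)=(m+1)^2/(4\pi)$. In your construction the $L^1$ MZ ratio is then at least $\alpha_n(m+1)^2/(4\pi)\ge (n+1)^2/(16\pi n)\to\infty$.

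Compare the two routes. The paper perturbs the equal weights of a spherical $(2n+2)$-design multiplicatively by $1+Y_{n+1,0}$, mirroring \cref{prop:AP-does-not-imply-FAP}. It bounds the $L^1$--$L^2$ sum by Cauchy--Schwarz with exactness on $Y_{n+1,0}^2$ and $p^2$. Its lower bounds need the explicit value $Y_{n+1,0}(\mathbf{n})$ and the optimal-order cardinality bound $N_n\le\varsigma(2n+2)^2$, and they give divergence only of order $\sqrt{n}$.

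Your additive atom of mass $1/n$ controls the extra term by the pointwise bound $|p(\mathbf{x}_1)|\le\sqrt{G_n(\mathbf{x}_1,\mathbf{x}_1)}\,\|p\|_{L^2(\mathbb{S}^2)}$. It needs no control on $N_n$, so any $2n$-design works. With the squared kernel, it gives $L^1$ and $L^2$ MZ ratios of order $n$. It also shows the mechanism directly: the $L^1$--$L^2$ condition tolerates a single atom of mass $O(1/n)$, because $\phi_n$ shows this is sharp. The $L^1$ and $L^2$ MZ conditions only tolerate mass $O(n^{-2})$.
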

\begin{proof}
Most implications, non-implications, and equivalences have already been established; the corresponding references are indicated in \cref{fig:hierarchy}. It remains to construct a sequence of quadrature formulas $\{Q_n\}_{n\in\mathbb{N}}$ that satisfies the $L^1$--$L^2$ MZ condition with constant independent of $n$, but does not satisfy either the $L^1$ or the $L^2$ MZ condition with constant independent of $n$. A construction similar to the one used in \cref{prop:AP-does-not-imply-FAP} serves as this purpose.

Let $\{X^{(n)}\}_{n\in\mathbb{N}}$ be a sequence of spherical $(2n+2)$-designs consisting of $N_n$ points, where $N_n\leq\varsigma(2n+2)^2$ for some constant $\varsigma>0$ independent of $n$ \cite{bondarenko2013optimal}. Applying rotations if necessary, assume that each $X^{(n)}$ contains the north pole $\mathbf{n}=(0,0,1)^\top$ as its first node, i.e., $\mathbf{x}_1^{(n)}=\mathbf{n}$. Set $v_j^{(n)}:=4\pi/N_n$ and define the quadrature formula $Q_n$ on $X^{(n)}$ with perturbed weights 
$$
w_j^{(n)} := v_j^{(n)}(1+Y_{n+1,0}(\mathbf{x}_j^{(n)})).
$$

We first show that $Q_n$ satisfies the $L^1$--$L^2$ MZ condition of degree $n$. For each $p \in \mathbb{P}_n(\mathbb{S}^2)$, we have
\begin{equation*}
    \sum_{j=1}^{N_n} |w_j^{(n)}||p(\mathbf{x}_j^{(n)})| \leq \sum_{j=1}^{N_n} v_j^{(n)}|p(\mathbf{x}_j^{(n)})| + \sum_{j=1}^{N_n} v_j^{(n)}|Y_{n+1,0}(\mathbf{x}_j^{(n)})||p(\mathbf{x}_j^{(n)})|.
\end{equation*}
By the Cauchy--Schwarz inequality, we can bound the first term of the right-hand side as
\begin{equation*}
    \sum_{j=1}^{N_n} v_j^{(n)}|p(\mathbf{x}_j^{(n)})| \leq \left(\sum_{j=1}^{N_n} v_j^{(n)}\right)^{1/2} \left(\sum_{j=1}^{N_n} v_j^{(n)}|p(\mathbf{x}_j^{(n)})|^2\right)^{1/2} = \|e\|_{L^2(\mathbb{S}^2)} \|p\|_{L^2(\mathbb{S}^2)} = \sqrt{4\pi}\|p\|_{L^2(\mathbb{S}^2)},
\end{equation*}
where the equality is due to the fact that $X^{(n)}$ is a spherical $(2n+2)$-design and $e, p^2 \in \mathbb{P}_{2n}(\mathbb{S}^2)$. Similarly, since $Y_{n+1,0}^2, p^2 \in \mathbb{P}_{2n+2}(\mathbb{S}^2)$, we can bound the second term as
\begin{align*}
    \sum_{j=1}^{N_n} v_j^{(n)}|Y_{n+1,0}(\mathbf{x}_j^{(n)})||p(\mathbf{x}_j^{(n)})|
    & \leq \left(\sum_{j=1}^{N_n} v_j^{(n)}|Y_{n+1,0}(\mathbf{x}_j^{(n)})|^2\right)^{1/2} \left(\sum_{j=1}^{N_n} v_j^{(n)}|p(\mathbf{x}_j^{(n)})|^2\right)^{1/2} \\
    & = \|Y_{n+1,0}\|_{L^2(\mathbb{S}^2)}\|p\|_{L^2(\mathbb{S}^2)} = \|p\|_{L^2(\mathbb{S}^2)}.
\end{align*}
Substituting these estimates yields
\begin{equation*}
    \sum_{j=1}^{N_n} |w_j^{(n)}||p(\mathbf{x}_j^{(n)})| \leq (\sqrt{4\pi} + 1)\|p\|_{L^2(\mathbb{S}^2)}.
\end{equation*}
Thus, each quadrature formula $Q_n$ satisfies the $L^1$--$L^2$ MZ condition of degree $n$ with constant $C = \sqrt{4\pi}+1$ independent of $n$.
    
Now we investigate the constant in the $L^1$ MZ condition for each $Q_n$. Define $\psi_n$ as the auxiliary polynomial given by
\begin{equation*}
    \psi_n(\mathbf{x}) := \phi_{m}^2(\mathbf{x}) \in \mathbb{P}_n(\mathbb{S}^2) \quad \text{with} \quad m := \lfloor n/2 \rfloor,
\end{equation*}
where the function $\phi_m$ is as defined in \eqref{eq:auxiliary-kernel}. Recall that $\|\phi_m\|_{L^2(\mathbb{S}^2)} = 1$, from which it follows that $\|\psi_n\|_{L^1} = 1$. Evaluating the discrete $L^1$-norm of $\psi_n$ associated with quadrature formula $Q_n$ yields
\begin{equation*}
\sum_{j=1}^{N_n} |w_j^{(n)}| |\psi_n(\mathbf{x}_j^{(n)})| \geq |w_1^{(n)}| |\psi_n(\mathbf{n})| = |v_1^{(n)}||1 + Y_{n+1,0}(\mathbf{n})|\phi_m^2(\mathbf{n}).
\end{equation*}
Recalling also that
\begin{equation*}
    v_1^{(n)} = \frac{4\pi}{N_n} \geq \frac{4\pi}{\varsigma (2n+2)^2}, \quad  Y_{n+1,0}(\mathbf{n}) = \sqrt{\frac{2n+3}{4\pi}}, \quad \text{and} \quad \phi_m(\mathbf{n}) = \frac{m+1}{\sqrt{4\pi}},
\end{equation*}
we obtain
\begin{align} 
\frac{\sum_{j=1}^{N_n} |w_j^{(n)}| |\psi_n(\mathbf{x}_j^{(n)})|}{\|\psi_n\|_{L^1}}
&\geq \frac{4\pi}{\varsigma(2n+2)^2}
\left(1+\sqrt{\frac{2n+3}{4\pi}}\right)
\frac{(\lfloor n/2\rfloor+1)^2}{4\pi} \notag\\
& \gtrsim 1+\sqrt{n}. \label{eq:asymptotic-lower-bound}
\end{align}
Thus, it is impossible for each $Q_n$ to satisfy the $L^1$ MZ condition of degree $n$ with constant independent of $n$. Since $\phi_m\in\mathbb{P}_n(\mathbb{S}^2)$, $\psi_n=\phi_m^2$ and $\|\phi_m\|_{L^2(\mathbb{S^2)}} = 1$, we also have
\begin{equation} \label{eq:divergence}
\frac{\sum_{j=1}^{N_n} |w_j^{(n)}| |\phi_m(\mathbf{x}_j^{(n)})|^2}{\|\phi_m\|_{L^2(\mathbb{S}^2)}^2}
=
\frac{\sum_{j=1}^{N_n} |w_j^{(n)}| |\psi_n(\mathbf{x}_j^{(n)})|}{\|\psi_n\|_{L^1}} \gtrsim 1 + \sqrt{n}.
\end{equation}
Thus, the $L^2$ MZ constants cannot be uniformly bounded in $n$, either.
\end{proof}

\section{Numerical Validations}
\label{sec:validation}

In this section, we present some computational methodologies for evaluating the MZ constants discussed in previous sections, and empirically validate the strict logical hierarchy of the stability conditions established in \Cref{sec:hierarchy}.

\subsection{Computation and Approximation of the Optimal MZ Constants}
\label{sec:computation-of-MZ-constants}

For each $\mathbf{x} \in \mathbb{S}^2$, let the vector $\mathbf{y}_n(\mathbf{x}) \in \mathbb{R}^{(n+1)^2}$ denote the evaluation of all spherical harmonics up to degree $n$ at $\mathbf{x}$. That is,
\begin{equation*}
    \mathbf{y}_n(\mathbf{x}) := (Y_{0,0}(\mathbf{x}), Y_{1,-1}(\mathbf{x}), Y_{1,0}(\mathbf{x}), Y_{1,1}(\mathbf{x}), \ldots, Y_{n,n}(\mathbf{x}))^\top.
\end{equation*}
Every polynomial $p \in \mathbb{P}_n(\mathbb{S}^2)$ has a unique representation $p(\mathbf{x}) = \mathbf{a}^\top \mathbf{y}_n(\mathbf{x})$ for some coefficient vector $\mathbf{a} \in \mathbb{R}^{(n+1)^2}$. Orthonormality of spherical harmonics implies the Parseval's identity $$
\|p\|_{L^2(\mathbb{S}^2)} = \|\mathbf{a}\|_2.
$$ 

This coefficient representation reduces the computation of the three MZ constants to finite-dimensional optimization problems. The $L^2$ MZ constant is the largest eigenvalue of the Gram matrix, whereas the $L^1$--$L^2$ and $L^1$ MZ constants involve absolute values and are approximated numerically by smooth optimization problems on $\mathbb{S}^{(n+1)^2-1}$. For the latter purpose, we use the smooth approximation $\phi_\epsilon(t):=\sqrt{t^2+\epsilon^2}$ with $\epsilon > 0$ a smoothing parameter.

\subsubsection{Computation of the \texorpdfstring{$L^2$}{L2} MZ Constant}
The optimal $L^2$ MZ constant $c_2$ is the smallest constant such that
\begin{equation*}
\sum_{j=1}^N |w_j||p(\mathbf{x}_j)|^2 \leq c_2\|p\|_{L^2(\mathbb{S}^2)}^2 \quad \forall\, p \in \mathbb{P}_n(\mathbb{S}^2).
\end{equation*}
In terms of the coefficient representation $p = \mathbf{a}^\top \mathbf{y}_n$, by the homogeneity of both sides and Parseval's identity, we have
\begin{equation*}
    c_2 = \max_{\mathbf{a} \in \mathbb{S}^{(n+1)^2-1}} \sum_{j=1}^N |w_j||\mathbf{a}^\top \mathbf{y}_n(\mathbf{x}_j)|^2.
\end{equation*}
Let $\mathbf{Y}_n$ be defined in \eqref{eq:basis-matrix} and define the Gram matrix $\mathbf{G}_n:=\mathbf{Y}_n\mathrm{diag}(|w_1|, \ldots, |w_N|)\mathbf{Y}_n^\top$. Then, we have
\begin{equation*}
    \sum_{j=1}^N |w_j||\mathbf{a}^\top \mathbf{y}_n(\mathbf{x}_j)|^2 = \mathbf{a}^\top \mathbf{G}_n \mathbf{a}.
\end{equation*}
It follows that $c_2 = \lambda_{\max}(\mathbf{G}_n)$, the largest eigenvalue of $\mathbf{G}_n$.

\subsubsection{Approximation of the \texorpdfstring{$L^1$--$L^2$}{L1-L2} MZ Constant}
The optimal $L^1$--$L^2$ MZ constant $C$ is the smallest constant satisfying
\begin{equation*}
    \sum_{j=1}^N |w_j||p(\mathbf{x}_j)| \leq C\|p\|_{L^2(\mathbb{S}^2)} \quad \forall\, p \in \mathbb{P}_n(\mathbb{S}^2).
\end{equation*}
By the homogeneity of both sides and Parseval's identity, we have
\begin{equation*}
    C = \max_{\mathbf{a} \in \mathbb{S}^{(n+1)^2-1}} \sum_{j=1}^N |w_j||\mathbf{a}^\top \mathbf{y}_n(\mathbf{x}_j)|.
\end{equation*}
The objective is nonsmooth, we therefore approximate the $L^1$--$L^2$ MZ constant by numerically solving the following smooth surrogate:
\begin{equation} \label{eq:L1-L2-MZ-constant}
    C \approx \max_{\mathbf{a} \in \mathbb{S}^{(n+1)^2-1}} \sum_{j=1}^N |w_j|\phi_\epsilon(\mathbf{a}^\top\mathbf{y}_n(\mathbf{x}_j)).
\end{equation}

\subsubsection{Approximation of the \texorpdfstring{$L^1$}{L1} MZ Constant}
The optimal $L^1$ MZ constant $c_1$ is the smallest constant satisfying
\begin{equation*}
    \sum_{j=1}^N |w_j||p(\mathbf{x}_j)| \leq c_1 \|p\|_{L^1(\mathbb{S}^2)} \quad \forall\, p \in \mathbb{P}_n(\mathbb{S}^2).
\end{equation*}
Equivalently,
\begin{equation*}
    c_1 = \sup_{\substack{p \in \mathbb{P}_n(\mathbb{S}^2) \\ p \neq 0}} \frac{\sum_{j=1}^N |w_j||p(\mathbf{x}_j)|}{\|p\|_{L^1(\mathbb{S}^2)}}.
\end{equation*}
The quotient is homogeneous of degree zero in $p$. Hence, using the coefficient representation, the optimization may be restricted to the hypersphere, and we obtain
\begin{equation*}
    c_1 = \max_{\mathbf{a} \in \mathbb{S}^{(n+1)^2-1}} \frac{\sum_{j=1}^N |w_j||\mathbf{a}^\top\mathbf{y}_n(\mathbf{x}_j)|}
    {\|\mathbf{a}^\top \mathbf{y}_n\|_{L^1(\mathbb{S}^2)}}.
\end{equation*}
In general, the denominator of the quotient does not admit an analytic expression. We numerically approximate it using a positive Lebedev quadrature formula exact of degree $131$ with nodes $\{\mathbf{z}_k\}_{k=1}^K$ and weights $\{\lambda_k\}_{k=1}^K$ taken from \cite{sphere_lebedev_rule}. We approximate $\|\mathbf{a}^\top \mathbf{y}_n\|_{L^1(\mathbb{S}^2)} \approx \sum_{k=1}^K \lambda_k |\mathbf{a}^\top \mathbf{y}_n(\mathbf{z}_k)|$.

To obtain a differentiable smooth objective, we further smooth the absolute values in both the numerator and denominator, and approximate the $L^1$ MZ constant by solving
\begin{equation} \label{eq:L1-MZ-constant}
    c_1 \approx \max_{\mathbf{a} \in \mathbb{S}^{(n+1)^2-1}} \frac{\sum_{j=1}^N |w_j| \phi_\epsilon(\mathbf{a}^\top \mathbf{y}_n(\mathbf{x}_j))}{\sum_{k=1}^K \lambda_k \phi_\epsilon(\mathbf{a}^\top \mathbf{y}_n(\mathbf{z}_k))}.
\end{equation}
Since the Lebedev quadrature formula is exact for constant functions and $\phi_\epsilon(t) \geq \epsilon$ for all $t \in \mathbb{R}$, the denominator of the above quotient is always bounded below by $\epsilon$.

\subsection{Numerical Verifications of the Strict Logical Hierarchy}

We apply the computational methodologies introduced in \Cref{sec:computation-of-MZ-constants} to numerically validate the strict logical hierarchy of the stability conditions shown in \cref{fig:hierarchy}. To approximate the $L^1$ MZ constant $c_1$ and the $L^1$--$L^2$ MZ constant $C$, we choose $\epsilon = 10^{-6}$ as the smoothing parameter and employ the manifold optimization toolbox Manopt \cite{manopt} to solve the optimization problems on hyperspheres. Since \eqref{eq:L1-L2-MZ-constant} and \eqref{eq:L1-MZ-constant} are nonconvex, the solver may return only a local maximizer or another stationary point. We consider the following three sequences of quadrature formulas:
\begin{enumerate}[(1), noitemsep]
    \item \textit{Spherical $n$-design quadrature formulas} $\{Q_n^1\}$: A sequence of baseline quadrature formulas where each $X^{(n)}$ is a spherical $n$-design of $N_n$ points (available through the public repository \cite{womersley2015efficient}) and all quadrature weights are equal: $w_j^{(n)} = 4\pi/N_n$.
    \item \textit{Dirac-like quadrature formulas} $\{Q_n^2\}$: A sequence of quadrature formulas defined in \cref{prop:Polya-does-not-imply-L1-L2-MZ}, where each $X^{(n)}$ is also a spherical $n$-design, but with all weights concentrated on a single node.
    \item \textit{Perturbed quadrature formulas} $\{Q_n^3\}$: A sequence of quadrature formulas defined in \cref{thm:hierarchy}, where each $X^{(n)}$ is a spherical $(2n+2)$-design of $N_n$ points containing the north pole, but whose weights are perturbed by a spherical harmonic, $w_j^{(n)} = (4\pi/N_n)(1+Y_{n+1,0}(\mathbf{x}_j^{(n)}))$.
\end{enumerate}

\begin{figure}[htbp]
\centering
\begin{subfigure}[b]{0.3\textwidth}
    \centering
    \includegraphics[width=\textwidth]{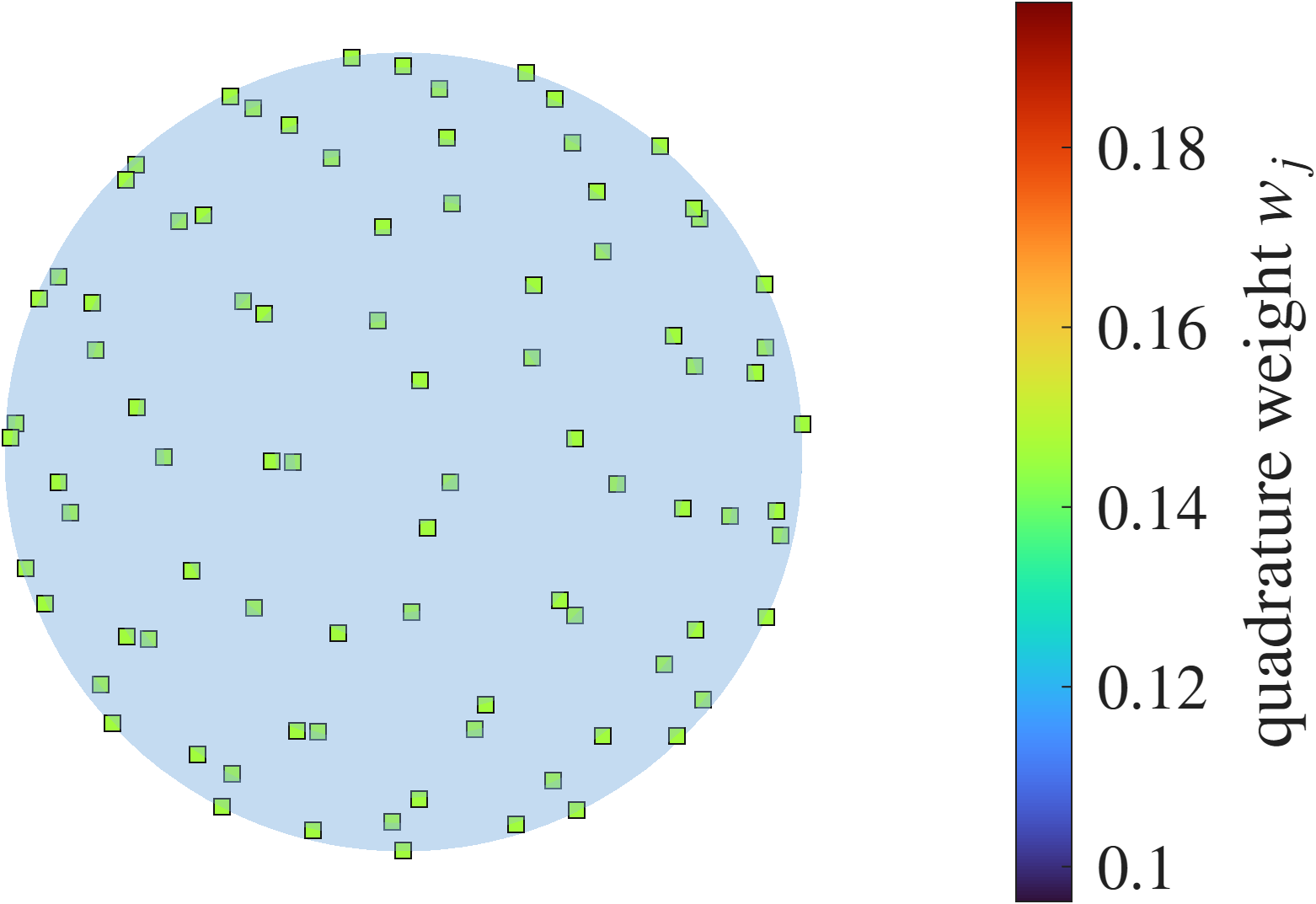} 
    \caption{$Q_{12}^1$}
\end{subfigure}
\hfill
\begin{subfigure}[b]{0.3\textwidth}
    \centering
    \includegraphics[width=\textwidth]{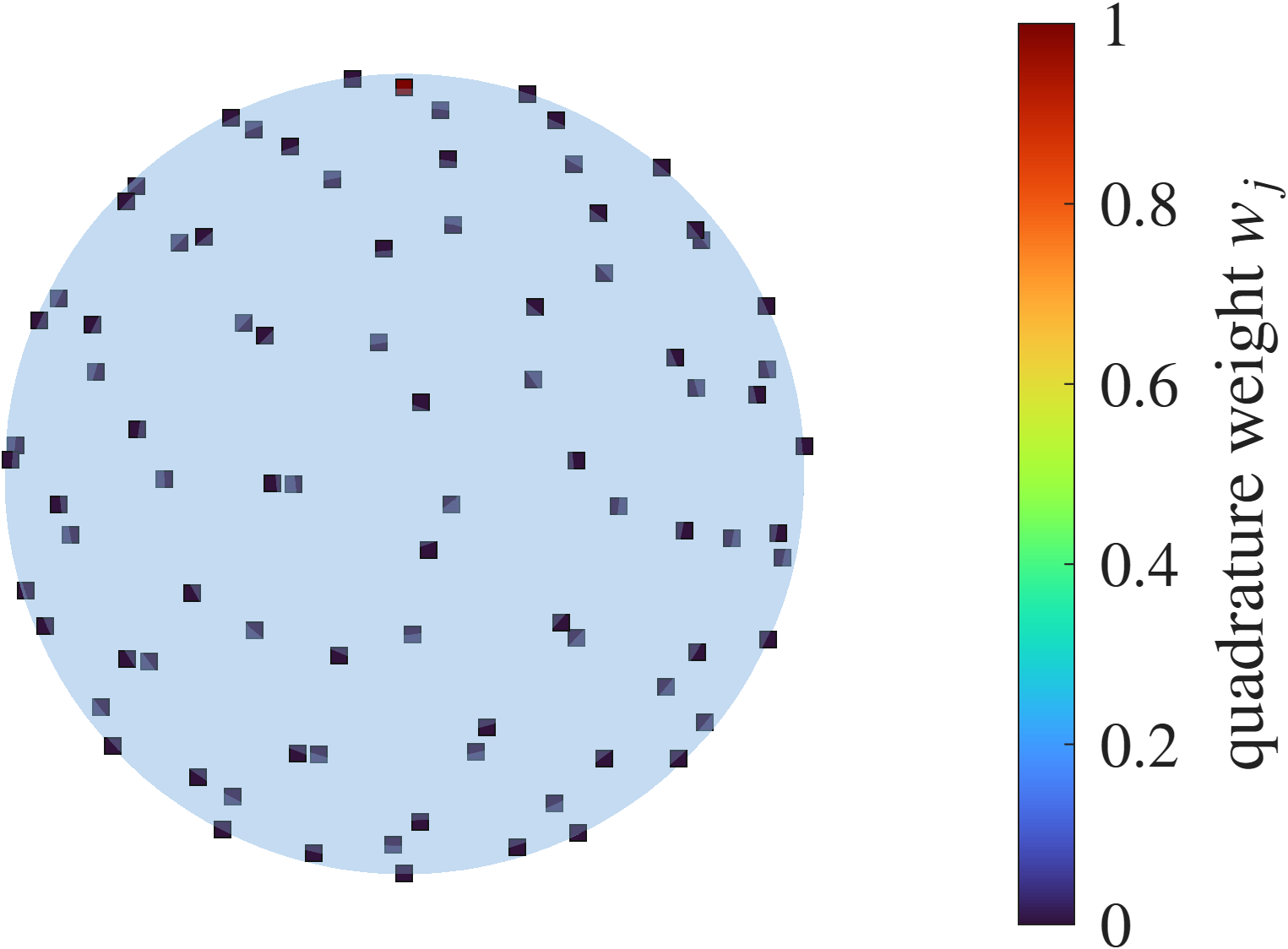} 
    \caption{$Q_{12}^2$}
\end{subfigure}
\hfill
\begin{subfigure}[b]{0.3\textwidth}
    \centering
    \includegraphics[width=\textwidth]{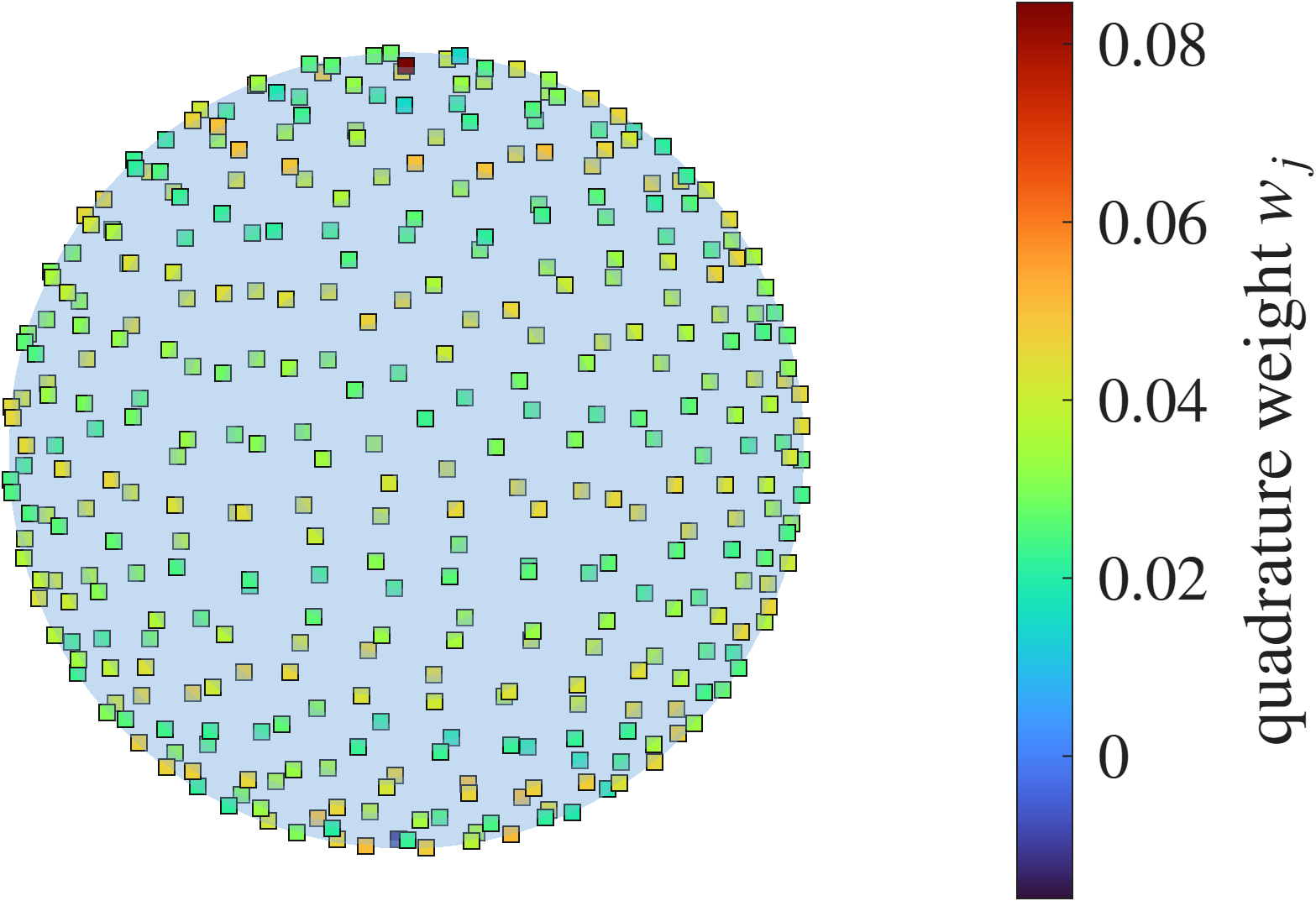}
    \caption{$Q_{12}^3$}
\end{subfigure}
\caption{Visualization of the node distributions and quadrature weights on the unit sphere for the three representative quadrature formulas at index $n = 12$.}
\label{fig:node_distributions}
\end{figure}

We visualize the node distributions and quadrature weights of a representative configuration from each sequence at index $n = 12$ in \cref{fig:node_distributions}. The baseline spherical $n$-design quadrature formula $Q_{12}^1$ exhibits a uniform weight distribution over the sphere. The Dirac-like quadrature formula $Q_{12}^2$ has an identical node distribution to $Q_{12}^1$, but concentrates its entire weights on a single node, leaving all other weights at zero. The perturbed quadrature formula $Q_{12}^3$ displays an oscillatory pattern introduced by $Y_{13,0}(\mathbf{x})$ along latitude lines.

The stability constants $M$, $C$, $c_1$, and $c_2$ are plotted with respect to the sequence index $n$ in \cref{fig:constant_growth}. These numerical results are consistent with the strict logical hierarchy in Figure \ref{fig:hierarchy}. For the spherical $n$-design quadratures $Q_n^1$, all four constants remain approximately constant over the tested range, in agreement with the stability of these rules. For the Dirac-like quadratures $Q_n^2$, the P\'olya constant $M$ remains equal to $1$, whereas the MZ constants $C$, $c_1$, and $c_2$ grow rapidly, confirming that the uniform P\'olya stability condition does not control stability of higher degree polynomials. For the perturbed quadratures $Q_n^3$, the $L^1$--$L^2$ MZ constant $C$ remains uniformly bounded above by $\sqrt{4\pi}+1\approx4.54$, as proved in \cref{thm:hierarchy}, while the estimates of $c_1$ and $c_2$ exhibit growth consistent with the divergent lower bounds in \cref{eq:asymptotic-lower-bound,eq:divergence}.

\begin{figure}[htbp]
\centering
\begin{subfigure}[b]{0.8\textwidth}
    \centering
    \includegraphics[width=\textwidth]{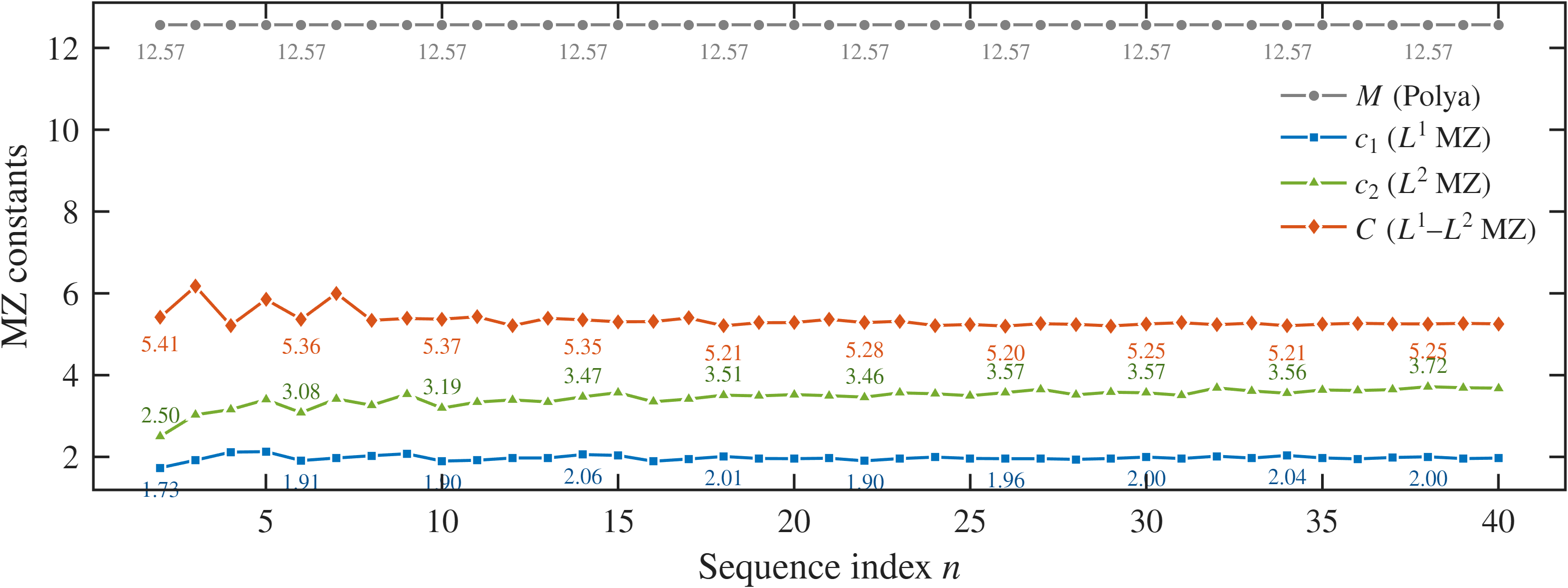}
    \caption{Spherical $n$-design quadrature formulas}
\end{subfigure}

\begin{subfigure}[b]{0.8\textwidth}
    \centering
    \includegraphics[width=\textwidth]{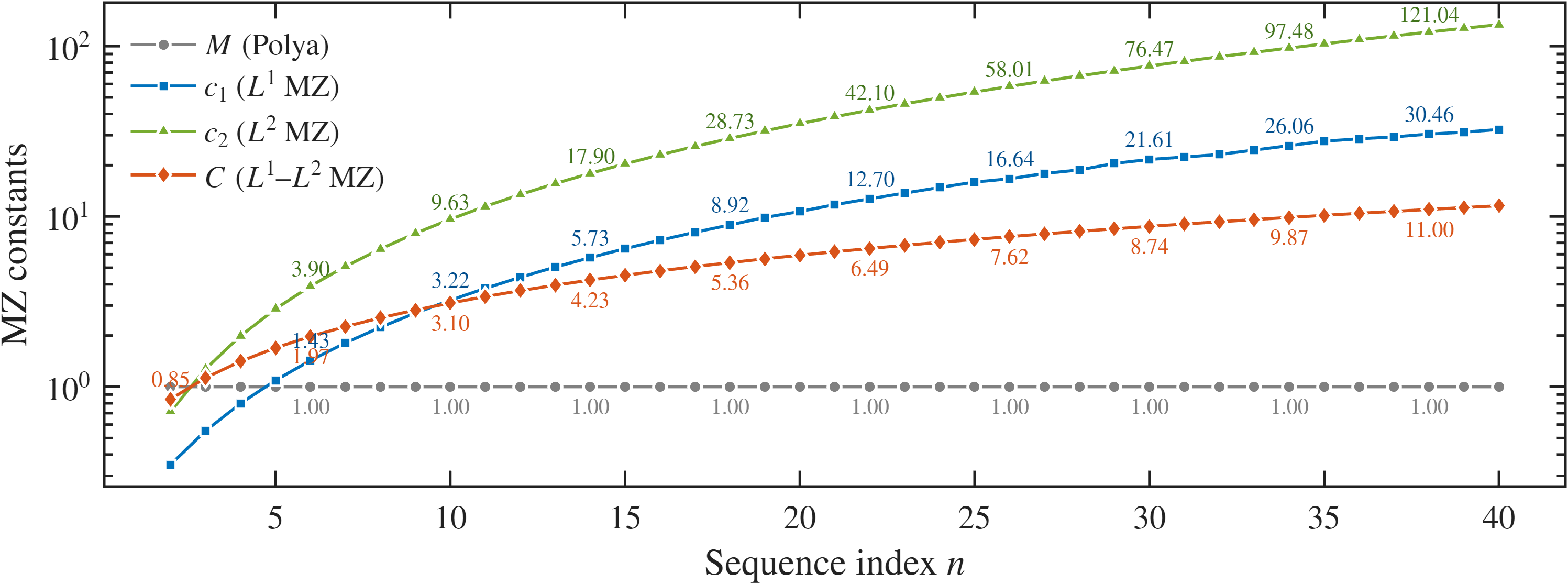}
    \caption{Dirac-like quadrature formulas}
\end{subfigure}

\begin{subfigure}[b]{0.8\textwidth}
    \centering
    \includegraphics[width=\textwidth]{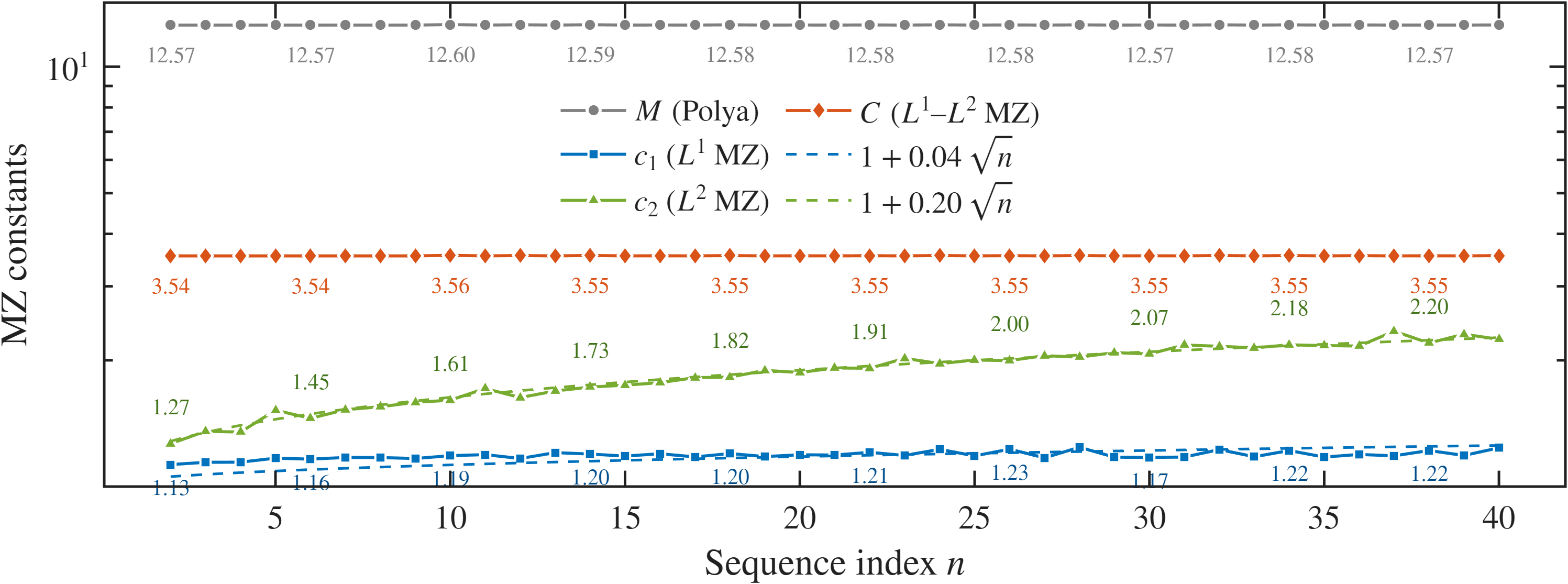}
    \caption{Perturbed quadrature formulas}
\end{subfigure}
\caption{Growth of the stability constants $M$, $C$, $c_1$, and $c_2$ plotted against the sequence index $n$ for the three sequences of quadrature formulas.}
\label{fig:constant_growth}
\end{figure}

\section{Conclusions}
\label{sec:conclusion}
In this paper, we answered the question about the necessary and sufficient conditions for the $L^2$ convergence of hyperinterpolation, and obtained a complete characterization on the unit sphere. We showed that the convergence for every continuous function is equivalent to the uniform $L^1$--$L^2$ MZ condition together with the asymptotic functional approximation property for spherical polynomials. We also proved that the stability condition is sharp in the sense that the optimal $L^1$--$L^2$ MZ constant is exactly the operator norm of the corresponding hyperinterpolation operator, and this identity is naturally explained through Banach space duality. We also constructed explicit counterexamples showing that P\'olya's classical conditions for quadrature convergence do not suffice for hyperinterpolation convergence. These results reveal a genuine distinction between the convergence of quadrature functionals and that of the associated approximation operators, and lead to a strict logical hierarchy among the stability and accuracy conditions pertinent to quadrature and hyperinterpolation convergence. We further numerically illustrated the separations established by the theoretical analysis.

\newpage
Although the present work is formulated on $\mathbb{S}^2$, the functional-analytic mechanism underlying the stability characterization is not inherently spherical. It relies principally on finite-dimensional reproducing approximation spaces, the Hilbert-space geometry of the target norm, and the duality between continuous functions and finite signed measures. This suggests that an analogous convergence theory can be developed on compact Riemannian manifolds by replacing spherical polynomial spaces with suitable spectral subspaces, provided that the corresponding reproducing kernels, MZ sampling inequalities, and asymptotic quadrature approximation properties are available. Establishing such a manifold-level theory, together with constructive geometric conditions for stable sampling and quadrature, is a natural direction for future research.

\bibliographystyle{siamplainmc}
\small
\bibliography{references}

\end{document}